\documentclass[12pt]{article}
\usepackage{amsmath}
\usepackage{mathtools}
\DeclarePairedDelimiter{\abs}{\lvert}{\rvert}
\DeclarePairedDelimiter{\pare}{(}{)}
\DeclarePairedDelimiter{\set}{\{}{\}}
\DeclarePairedDelimiter{\norm}{\lVert}{\rVert}

\usepackage{amsfonts}
\usepackage{amsthm}
\usepackage{autobreak}
\usepackage[english]{babel} 
\usepackage{bold-extra}
\usepackage{indentfirst}
\usepackage{hyperref}
\usepackage{enumitem}
\usepackage{todonotes}
\usepackage[margin=1.1in]{geometry}
\newtheorem{definition}{Definition}
\newtheorem{lemma}[definition]{Lemma}
\newtheorem{theorem}[definition]{Theorem}
\newtheoremstyle{r}
{}
{}
{\normalfont}
{}
{\scshape}
{.}
{ }
{}
\theoremstyle{r}
\newtheorem{remark}[definition]{Remark}
\numberwithin{definition}{section}
\numberwithin{equation}{section}
\allowdisplaybreaks
\title{Universal Radial Approximation in Spaces of Analytic Functions}
\author{Konstantinos Maronikolakis}
\date{}
\begin{document}
	\maketitle
	\begin{abstract}
		Recently, Charpentier showed that there exist holomorphic functions $f$ in the\break unit disk such that, for any proper compact subset $K$ of the unit circle,\break any continuous function $\phi$ on $K$ and any compact subset $L$ of the unit disk, there exists an increasing sequence $(r_n)_{n\in\mathbb{N}}\subseteq[0,1)$ converging to 1 such that $\abs*{f\pare*{r_n\pare*{\zeta-z}+z}-\phi(\zeta)}\to0$ as $n\to\infty$ uniformly for $\zeta\in K$ and $z\in L$ (see \cite{Charpentier1}). In this paper, we give analogues of this result for the Hardy spaces $H^p(\mathbb{D}),1\leq p<\infty$. In particular, our main result implies that, if we fix a compact subset $K$ of the unit circle with zero arc length measure, then there exist functions in $H^p(\mathbb{D})$ whose radial limits can approximate every continuous function on $K$. We give similar results for the Bergman and Dirichlet spaces.
	\end{abstract}
	\textbf{Mathematics Subject Classification:} 30K15, 30H10, 30E10.\\
	\textbf{Keywords:} Radial limits, Universality, Baire's Theorem, Boundary behaviour, Hardy spaces.
	\section{Introduction}
	The study of the boundary behaviour of holomorphic functions in the unit disk $\mathbb{D}$ is of great importance. A vast range of results exist regarding holomorphic functions in $\mathbb{D}$ that have wild boundary behaviour. Throughout the paper, we will say that a subset of a metric space $X$ is residual if it contains a dense $G_\delta$ subset of $X$ and that a property is satisfied generically if the set of elements with this property is residual.
	
	Firstly, we will look at the universal Taylor series established by Nestoridis in 1996, which have been shown to possess interesting boundary properties. We define $U(\mathbb{D},0)$ to be the set of functions holomorphic functions $f$ in $\mathbb{D}$ with the following property: For every compact set $K\subseteq\mathbb{C}\setminus\mathbb{D}$ with connected complement and any function $h$ continuous on $K$ and holomorphic in $K^\mathrm{o}$, there exists a subsequence $\pare*{S_{\lambda_n}(f)}_{n\in\mathbb{N}}$ of the partial sums $\pare*{S_n(f)}_{n\in\mathbb{N}}$ of the Taylor series of $f$ at 0 that converges to $h$ uniformly on $K$.This class of functions is a dense $G_\delta$ subset of $H(\mathbb{D})$, where $H(\mathbb{D})$ is the space of holomorphic functions on the unit disk endowed with the topology of uniform convergence on all compact subsets of $\mathbb{D}$ \cite{Nestoridis1}.
	
	In \cite{Bayart2}, Bayart proved that for any $f\in U(\mathbb{D},0)$, any $\zeta\in\mathbb{T}:=\partial\mathbb{D}$, with at most one exception, and any open disk $D_\zeta\subseteq\mathbb{D}$ with $\overline{D_\zeta}\bigcap\mathbb{T}=\{\zeta\}$, the set $f(D_\zeta)$ is unbounded. In \cite{Gardiner2}, Gardiner and Khavinson showed the following analogue of Picard's Theorem for the class $U(\mathbb{D},0)$: Let $f\in U(\mathbb{D},0)$, then for every $\zeta\in\mathbb{T}$ and $r>0$, the function $f$ takes every value in $\mathbb{C}$, with at most one exception, infinitely often on $D(\zeta,r)\bigcap\mathbb{D}$. The boundary behaviour of the derivatives of functions in $U(\mathbb{D},0)$ has also been studied. More specifically, if $f\in U(\mathbb{D},0)$, then there exists a residual subset $G$ of $\mathbb{T}$, such that the set $\{f^{(n)}(r\zeta):0<r<1\}$ is unbounded for every $\zeta\in G$ and every positive integer $n$ \cite{Armitage1}.
	
	Since the class $U(\mathbb{D},0)$ is a dense $G_\delta$ subset of $H(\mathbb{D})$, the boundary properties mentioned above are satisfied generically in $H(\mathbb{D})$. For more results regarding the boundary behaviour of functions in $U(\mathbb{D},0)$, see \cite{Melas1}, \cite{Costakis2}, \cite{Bernal1}, \cite{Gardiner3} and \cite{Gardiner1}.
	
	Next, we will state some results regarding maximal cluster sets. We give the appropriate definitions. Let $f\in H(\mathbb{D})$ and $A\subseteq\mathbb{D}$ such that $\overline{A}\bigcap\mathbb{T}\neq\emptyset$. Let $\widehat{\mathbb{C}}:=\mathbb{C}\bigcup\{\infty\}$ denote the extended complex plane then the cluster set of $f$ along $A$ is the set
	\begin{align*}
		C_A(f)=\Big\{w\in\widehat{\mathbb{C}}:& \ w=\lim_{n\to\infty}f(z_n)\text{ where }(z_n)_{n\in\mathbb{N}}\text{ is a sequence in }A\\			&\text{ such that }|z_n|\to1\Big\}.
	\end{align*}
	Similarly, if $\zeta_0\in\overline{A}\bigcap\mathbb{T}$, then the cluster set of $f$ along $A$ at $\zeta_0$ is the set
	\begin{align*}
		C_A(f,\zeta_0)=\Big\{w\in\widehat{\mathbb{C}}:& \ w=\lim_{n\to\infty}f(z_n)\text{ where }(z_n)_{n\in\mathbb{N}}\text{ is a sequence in }A\\			&\text{ that converges to }\zeta_0\Big\}.
	\end{align*}
	For background on cluster sets, we refer to \cite{Noshiro1} and \cite{Collingwood1}. We will say that a cluster set is maximal if it is equal to $\widehat{\mathbb{C}}$. In this context, functions in $H(\mathbb{D})$ having universal approximation properties along any continuous path having limit points in $\mathbb{T}$, have been shown to exist. More specifically, in \cite{Bernal2}, the authors constructed a dense linear subspace of $H(\mathbb{D})$, every element of which, except for zero, has maximal cluster set along any curve $\gamma\subseteq\mathbb(D)$ tending to $\mathbb{T}$ with $Osc(\gamma)\neq\mathbb{T}$, where $Osc(\gamma)=\{t\in\mathbb{T}:t=\lim_{n\to\infty}z_n\text{ where }(z_n)_{n\in\mathbb{N}}\text{ is a sequence in }\gamma\text{ that converges to some point of }\mathbb{T}\}$. The same authors showed that the set of $f\in H(\mathbb{D})$ such that $C_{\rho_\zeta}(f^{(l)},\zeta)=\widehat{\mathbb{C}}$ for every $l\in\mathbb{N}$ and $\zeta\in\mathbb{T}$ is residual, where $\rho_\zeta=\{r\zeta:r\in[0,1)\}$ for $\zeta\in\mathbb{T}$ \cite{Bernal3}. For a survey on similar results, we refer to \cite{Prado1}.
	
	Further, we discuss functions in $H(\mathbb{D})$ whose radial limits universally approximate functions defined on $\mathbb{T}$ (or subsets of $\mathbb{T}$). In \cite{Bayart3}, working in several complex variables, Bayart showed that, generically in $H(B)$ (where $B$ is the unit ball of $\mathbb{C}^N$), every function has the following property: Given any measurable function $\phi$ on $\partial B$, there exists an increasing sequence $(r_n)_{n\in\mathbb{N}}\subseteq[0,1)$ converging to 1 such that, for every $z\in B,f\pare*{r_n\pare*{\zeta-z}+z}\to\phi(\zeta)$ as $n\to\infty$ for almost every $\zeta\in\partial B$.
	
	Recently, Charpentier proved the following Theorem (Theorem 1 in \cite{Charpentier1}):
	\begin{theorem}
		We define $\mathcal{V}(\mathbb{D})$ to be the set of all functions $f\in H(\mathbb{D})$ that satisfy the following property: Given any compact subset $K$ of $\mathbb{T}$, different from $\mathbb{T}$, any continuous function $\phi$ on $K$, any compact subset $L$ of $\mathbb{D}$ and any $l\in\mathbb{Z}$ there exists an increasing sequence $(r_n)_{n\in\mathbb{N}}\subseteq[0,1)$ converging to 1 such that
		$$\sup_{\zeta\in K}\sup_{z\in L}\abs*{f^{(l)}\pare*{r_n\pare*{\zeta-z}+z}-\phi(\zeta)}\to0\text{ as }n\to\infty.$$
		Then, $\mathcal{V}(\mathbb{D})$ is a dense $G_\delta$ subset of $H(\mathbb{D})$.
	\end{theorem}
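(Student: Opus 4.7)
My plan is a standard Baire category argument: exhibit $\mathcal{V}(\mathbb{D})$ as a countable intersection of open dense subsets of the Fr\'echet space $H(\mathbb{D})$.

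To replace the uncountable parameters $(K,\phi,L)$ by a countable family, I enumerate (i) the closed sets $K_m=\mathbb{T}\setminus A_m$, where $A_m$ is an open sub-arc of $\mathbb{T}$ with endpoints of rational argument (every proper compact $K\subsetneq\mathbb{T}$ is contained in some such $K_m$); (ii) the polynomials $P_n$ with Gaussian rational coefficients (dense in $C(K_m)$ by Mergelyan, since $\mathbb{C}\setminus K_m$ is connected); and (iii) the compact sets $L_j=\overline{D(0,1-1/j)}$ exhausting $\mathbb{D}$. For each such tuple, together with $l\in\mathbb{Z}$ and $s,\tau\in\mathbb{N}$, put
\[
U(K_m,P_n,L_j,l,s,\tau)=\set*{f\in H(\mathbb{D}):\exists\,r\in[1-\tfrac{1}{\tau},1),\ \sup_{\zeta\in K_m,z\in L_j}\abs*{f^{(l)}(r(\zeta-z)+z)-P_n(\zeta)}<\tfrac{1}{s}},
\]
where for $l<0$, $f^{(l)}$ denotes the $|l|$-fold iterated antiderivative of $f$ vanishing to order $|l|$ at the origin. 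Each $U$ is open in $H(\mathbb{D})$ because $f\mapsto f^{(l)}$ is continuous for the topology of uniform convergence on compacta. Combining Tietze extension of $\phi$ from $K$ to some containing $K_m$, Mergelyan approximation by a $P_n$, and an inductive extraction of an increasing sequence $r_n\uparrow 1$ with shrinking tolerance, one verifies $\mathcal{V}(\mathbb{D})=\bigcap U$, yielding the $G_\delta$ half.

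For the density of a given $U$, fix $f_0\in H(\mathbb{D})$, a compact $M\subseteq\mathbb{D}$, and $\varepsilon>0$; after enlargement I may assume $M=\overline{D(0,R)}$ with $R<1$. I choose $r\in[1-1/\tau,1)$ close enough to $1$ that the set $E_r=\set*{r\zeta+(1-r)z:\zeta\in K_m,z\in L_j}$ is disjoint from $M$ and that, for a small $\delta>0$, the complement $\mathbb{C}\setminus(M\cup E_r^{(\delta)})$ is connected: the arc $rA_m$ on the circle of radius $r$ provides the passage between the interior annulus $R<\abs*{w}<r-\delta$ and the exterior of $\overline{\mathbb{D}}$. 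For $l\geq 0$, I apply Runge's theorem to produce a polynomial $g$ with $\abs*{g-f_0}<\eta$ on $M$ and $\abs*{g-Q}<\eta$ on $E_r^{(\delta)}$, where $Q$ is a fixed polynomial with $Q^{(l)}(w)=P_n(w/r)$. Cauchy's integral formula then gives $\abs*{g^{(l)}-Q^{(l)}}\leq l!\,\eta/\delta^l$ on $E_r$, and combining with the equicontinuity estimate $\abs*{P_n(w/r)-P_n(\zeta)}<1/(2s)$ (valid for $r$ near $1$) puts $g\in U$ as soon as $\eta$ is small enough, while $\abs*{g-f_0}<\varepsilon$ on $M$.

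For $l<0$ I interchange the roles and work with $F:=f^{(l)}$ directly: construct a polynomial $F$ with $\abs*{F-F_0}<\eta$ on a thickening $M^{(\delta')}$, where $F_0:=f_0^{(l)}$, and $\abs*{F-P_n(w/r)}<\eta$ on $E_r^{(\delta)}$. By convention $F_0^{(j)}(0)=0$ for $j<|l|$, so the Taylor coefficients of $F$ at $0$ of order less than $|l|$ are small and can be subtracted to enforce the correct vanishing of $F$ at $0$ without appreciably worsening the approximations. Setting $g:=F^{(|l|)}$, Cauchy applied to $F\approx F_0$ on $M^{(\delta')}$ gives $g\approx F_0^{(|l|)}=f_0$ on $M$, while $g^{(l)}=F$ approximates $P_n(w/r)\approx P_n(\zeta)$ on $E_r$. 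The main obstacle throughout is the simultaneous calibration of the parameters $r,\delta,\eta,\delta'$: $r$ must be close to $1$ both for the geometric disjointness of $M$ and $E_r$ and to make $\abs*{P_n(w/r)-P_n(\zeta)}$ uniformly small, while $\delta$ and $\eta$ must remain compatible with the Cauchy derivative transfer and, in the negative case, with the Taylor correction. Once these choices are reconciled and Runge is applied, Baire's theorem concludes that $\mathcal{V}(\mathbb{D})$ is a dense $G_\delta$ subset of $H(\mathbb{D})$.
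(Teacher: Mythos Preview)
This theorem is not proved in the present paper; it is quoted from Charpentier (\cite{Charpentier1}, Theorem~1) as motivation, and the paper's own contributions are the analogues for $H^p$, $A^p$ and $\mathcal{D}$. So there is no ``paper's own proof'' to compare against directly.

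That said, your argument is essentially correct and is exactly the template that both Charpentier and this paper (for its Hardy--Bergman--Dirichlet versions) follow: a countable reduction of the parameters $(K,\phi,L,l)$, openness of the building blocks via continuity of $f\mapsto f^{(l)}$ on compacta, and density via a simultaneous approximation step combined with the continuity of a polynomial under the perturbation $\zeta\mapsto r(\zeta-z)+z$ as $r\to 1$. The one place where your write-up differs in flavour from the paper's analogous proofs is the density step: you build the approximant by hand via Runge on the disjoint compacta $M$ and $E_r^{(\delta)}$, whereas the paper (for $H^p$, $A^p$, $\mathcal{D}$) invokes off-the-shelf simultaneous approximation lemmas (Lemmas~\ref{hardymerg}, \ref{bergmerg}, \ref{dirmerg}) that give a polynomial close to $g$ in norm and close to $\phi$ uniformly on $K$, then lets $r\to 1$. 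In $H(\mathbb{D})$ no such lemma is needed because Runge does the job directly, so your route is the natural one there; conversely, your geometric argument would not transplant to $H^p$ since norm-closeness is not implied by closeness on a compact $M\subset\mathbb{D}$.

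Two small points worth tightening. First, your connectedness claim for $\mathbb{C}\setminus(M\cup E_r^{(\delta)})$ is correct in spirit but needs care at the endpoints of $A_m$: the set $E_r$ contains $rK_m$, which meets the closure of $rA_m$, so the ``passage'' should be taken through a strictly smaller sub-arc of $A_m$ and $\delta$ chosen accordingly. Second, in the $l<0$ case, after subtracting the low-order Taylor polynomial $p$ of $F$ you should record explicitly that $p$ is uniformly small on all of $\overline{\mathbb{D}}$ (via Cauchy estimates for $F-F_0$ on a disk about $0$), so that the approximation of $P_n$ on $E_r$ survives the correction. With those details filled in, the proof is complete.
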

	We note that $f^{(l)}$ denotes the anti-derivative of order $l$ of $f$, for $l<0$ and $f^{(0)}=f$. In \cite{Charpentier1}, a similar result is shown for the the disk algebra $A(\mathbb{D})$. We also refer to \cite{Charpentier3}, which is a paper concerning the universal boundary properties of derivatives of functions in $A(\mathbb{D})$.
	
	In this paper, we give an analogous Theorem for the Hardy space $H^p(\mathbb{D}),1\leq p<\infty$. We recall that any $f\in H^p(\mathbb{D})$ has non-tangential limits $m$-almost everywhere on $\mathbb{T}$, where $m$ is the arc length measure, and so its boundary behaviour is more restricted. Nevertheless, some results exist regarding functions in $H^p(\mathbb{D})$ with wild boundary behaviour.
	
	In \cite{Bernal2}, the following analogue of a previously mentioned result is given for Hardy spaces. Let $1\leq p<\infty$ and $\Gamma$ be a countable collection of curves tending to $\mathbb{T}$, then there exists a dense linear subspace of $H^p(\mathbb{D})$, every element of which, except for zero, has maximal cluster set along any curve $\gamma\in\Gamma$. We also have that, generically in $H^p(\mathbb{D})$, every function $f$ has maximal cluster set $C_{\mathbb{D}}(f,\zeta)$ for every $\zeta\in\mathbb{T}$ \cite{Brown2}.
	
	Our main result regarding the existence of functions in $H^p(\mathbb{D})$ with universal radial limits is Theorem \ref{hardymain}. Moreover, we examine functions in $H^p(\mathbb{D})$ whose derivatives of any order also have universal radial limits (Theorem \ref{hardydermain}). Finally, we give similar results for different subspaces of $H(\mathbb{D})$ such as the Bergman spaces $A^p(\mathbb{D})$ (Theorems \ref{bergmain} and \ref{bergdermain}) and the Dirichlet space $\mathcal{D}$ (Theorem \ref{dirmain}). The main tools for the proofs of our results are simultaneous approximation lemmas given in \cite{Beise1} and \cite{Muller1} (see Lemmas \ref{hardymerg},\ref{bergmerg} and \ref{dirmerg}). The proofs also use methods similar to the proof of Theorem 1 in \cite{Charpentier1} (mentioned above).
	\section{Universal radial limits in spaces of analytic functions}
	\label{pradial}
	Let $1\leq p<\infty$. We denote by $H^p(\mathbb{D})$ the space of $f\in H(\mathbb{D})$ that satisfy
	$$\norm*{f}_{H^p}:=\sup_{0<r<1}\pare*{\frac{1}{2\pi}\int_{0}^{2\pi}\abs*{f\pare*{re^{i\theta}}}^pd\theta}^{\frac{1}{p}}<\infty.$$
	This space, endowed with the norm $\norm{\cdot}_{H^p}$, is a Banach space and so Baire's Theorem is at our disposal. Let $\rho$ be a subset of $[0,1)$ with 1 as a limit point and $K$ a compact subset of $\mathbb{T}$ with $m(K)=0$, where $m$ is the arc length measure.
	\begin{definition}
		We define $\mathcal{V}_{H^p}^K(\mathbb{D},\rho)$ to be the set of $f\in H^p(\mathbb{D})$ that satisfy the following property: Given any continuous function $\phi$ on $K$ and any compact subset $L$ of $\mathbb{D}$, there exists an increasing sequence $(r_n)_{n\in\mathbb{N}}\subseteq\rho$ converging to 1 such that
		$$\sup_{\zeta\in K}\sup_{z\in L}\abs*{f\pare*{r_n\pare*{\zeta-z}+z}-\phi(\zeta)}\to0\text{ as }n\to\infty.$$
	\end{definition}
	\begin{remark}
		We notice that the definition cannot be improved by asking that $m(K)>0$. This is true because any $f\in H^p(\mathbb{D})$ has non-tangential limits $m$-almost everywhere on $\mathbb{T}$ and so it cannot have the universal boundary property described above on a set of positive measure.
	\end{remark}
	\begin{theorem}
		\label{hardymain}
		The set $\mathcal{V}_{H^p}^K(\mathbb{D},\rho)$ is a dense $G_\delta$ subset of $H^p(\mathbb{D})$.
	\end{theorem}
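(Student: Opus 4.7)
The plan is to proceed in two stages along the blueprint of Charpentier's argument for Theorem~1 of \cite{Charpentier1}: first exhibit $\mathcal{V}_{H^p}^K(\mathbb{D},\rho)$ as a countable intersection of open sets, then establish density of each such open set via Baire's theorem, using the Beise--M\"uller simultaneous approximation result (Lemma~\ref{hardymerg}) as the main technical tool. The role of the hypothesis $m(K)=0$ is to allow a correction polynomial that approximates a prescribed continuous function on $K$ to nevertheless have arbitrarily small $H^p$ norm.

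For the $G_\delta$ structure, I would let $(\phi_j)_{j\in\mathbb{N}}$ be dense in $C(K)$ (separable since $K$ is compact metric), $(L_m)_{m\in\mathbb{N}}$ a compact exhaustion of $\mathbb{D}$, and set, for positive integers $j,m,s,N$,
$$E(j,m,s,N)=\bigcup_{\substack{r\in\rho\\ 1-1/N<r<1}}\set*{f\in H^p(\mathbb{D}):\sup_{\zeta\in K,\,z\in L_m}\abs*{f\pare*{r(\zeta-z)+z}-\phi_j(\zeta)}<\frac{1}{s}}.$$
For each fixed $r\in(0,1)$, the set $\{r(\zeta-z)+z:\zeta\in K,z\in L_m\}$ is a compact subset of $\mathbb{D}$ (since $\abs{r\zeta+(1-r)z}\leq r+(1-r)\sup_{z\in L_m}\abs{z}<1$) and point evaluation is a bounded functional on $H^p(\mathbb{D})$; hence each summand, and therefore $E(j,m,s,N)$, is open. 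A direct approximation argument using density of $(\phi_j)$ in $C(K)$ and the exhaustion by $(L_m)$ gives $\mathcal{V}_{H^p}^K(\mathbb{D},\rho)=\bigcap_{j,m,s,N}E(j,m,s,N)$, and extraction of an increasing subsequence from $\rho$ recovers the precise form demanded in the definition.

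For density, I would fix $f_0\in H^p(\mathbb{D})$, $\epsilon>0$, and indices $j,m,s,N$. First, approximate $f_0$ in $H^p$ norm within $\epsilon/2$ by a polynomial $P_0$. Then apply Lemma~\ref{hardymerg} to the continuous function $\phi_j-P_0|_K$ on $K$: since $m(K)=0$, this yields a polynomial $Q$ with $\sup_{\zeta\in K}\abs{Q(\zeta)-(\phi_j(\zeta)-P_0(\zeta))}<1/(2s)$ and $\norm{Q}_{H^p}<\epsilon/2$. Set $g:=P_0+Q$; then $\norm{g-f_0}_{H^p}<\epsilon$ and $\abs{g(\zeta)-\phi_j(\zeta)}<1/(2s)$ for every $\zeta\in K$.

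To complete membership of $g$ in $E(j,m,s,N)$, exploit that $g$ is a polynomial and hence uniformly continuous on $\overline{\mathbb{D}}$: choose $\delta>0$ with $\abs{w_1-w_2}<\delta\Rightarrow\abs{g(w_1)-g(w_2)}<1/(2s)$, and then pick $r\in\rho$ with $r>\max(1-1/N,1-\delta/2)$, which is possible because $1$ is a limit point of $\rho$. For $\zeta\in K$, $z\in L_m$ one has $\abs{r(\zeta-z)+z-\zeta}=(1-r)\abs{z-\zeta}\leq 2(1-r)<\delta$, whence $\abs{g(r(\zeta-z)+z)-\phi_j(\zeta)}<1/s$ uniformly, so $g\in E(j,m,s,N)$. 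The main obstacle is the simultaneous-approximation step: producing a polynomial that approximates an arbitrary continuous function on a prescribed measure-zero boundary set while having arbitrarily small $H^p$ norm. This is exactly where the hypothesis $m(K)=0$ is essential and where Lemma~\ref{hardymerg} does the real work; the remainder of the argument is a soft Baire-category wrap-up.
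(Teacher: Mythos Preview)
Your argument is correct and follows essentially the same Baire-category scheme as the paper, with Lemma~\ref{hardymerg} doing the real work in the density step. The only differences are cosmetic: you add an extra index $N$ to force the radii toward $1$ explicitly, and you split the density step into a polynomial approximation of $f_0$ plus a small correction, whereas the paper applies Lemma~\ref{hardymerg} directly to the given $g\in H^p(\mathbb{D})$.
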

	In order to prove Theorem \ref{hardymain}, it suffices to write $\mathcal{V}_{H^p}^K(\mathbb{D},\rho)$ as a countable intersection of dense open sets and use Baire's Theorem.
	
	Let $\pare*{P_j}_{j\in\mathbb{N}}$ be an enumeration of complex polynomials whose coefficients have rational coordinates. For $j\in\mathbb{N},s\in\mathbb{N}^*$ and $r\in[0,1)$, we introduce the set $$U(j,s,r):=\set*{f\in H^p(\mathbb{D}):\sup_{\zeta\in K}\sup_{z\in \overline{D\pare*{0,\frac{s}{s+1}}}}\abs*{f\pare*{r\pare*{\zeta-z}+z}-P_j(\zeta)}<\frac{1}{s}}.$$
	\begin{lemma}
		\label{hardyint}
		We have that $$\mathcal{V}_{H^p}^K(\mathbb{D},\rho)=\bigcap_{j,s} \ \bigcup_{r\in\rho}U(j,s,r).$$
	\end{lemma}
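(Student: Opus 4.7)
The plan is to prove the equality via the two set inclusions. For the forward inclusion $\mathcal{V}_{H^p}^K(\mathbb{D},\rho)\subseteq\bigcap_{j,s}\bigcup_{r\in\rho}U(j,s,r)$, I would take $f\in\mathcal{V}_{H^p}^K(\mathbb{D},\rho)$ and arbitrary indices $j,s\in\mathbb{N}^*$; applying the defining property of $\mathcal{V}_{H^p}^K(\mathbb{D},\rho)$ to the continuous target $\phi=P_j|_K$ and the compact set $L=\overline{D(0,s/(s+1))}\subseteq\mathbb{D}$ produces an increasing sequence $(r_n)\subseteq\rho$ with $r_n\to 1$ along which $\sup_{\zeta\in K}\sup_{z\in L}|f(r_n(\zeta-z)+z)-P_j(\zeta)|\to 0$. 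For $n$ large this supremum falls below $1/s$, exhibiting $f\in U(j,s,r_n)\subseteq\bigcup_{r\in\rho}U(j,s,r)$; since $(j,s)$ was arbitrary, $f$ lies in the full intersection.

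For the reverse inclusion I fix $f\in\bigcap_{j,s}\bigcup_{r\in\rho}U(j,s,r)$, arbitrary $\phi\in C(K)$, and compact $L\subseteq\mathbb{D}$, and choose $s_0\in\mathbb{N}^*$ so that $L\subseteq\overline{D(0,s_0/(s_0+1))}$. Since $m(K)=0$ forces $K\neq\mathbb{T}$, the complement $\mathbb{C}\setminus K$ is connected and Mergelyan's theorem ensures the uniform density of polynomials in $C(K)$; as $(P_j)$ enumerates polynomials with rational-coordinate coefficients, it too is dense in $C(K)$. For each $n\in\mathbb{N}^*$ I would pick $j_n$ with $\sup_K|P_{j_n}-\phi|<\frac{1}{2n}$ and $s_n\geq\max(s_0,2n)$; the hypothesis then furnishes $r_n\in\rho$ with $f\in U(j_n,s_n,r_n)$, and combining $L\subseteq\overline{D(0,s_n/(s_n+1))}$ with the triangle inequality gives $\sup_{\zeta\in K}\sup_{z\in L}|f(r_n(\zeta-z)+z)-\phi(\zeta)|<\frac{1}{s_n}+\frac{1}{2n}\leq\frac{1}{n}$.

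The main obstacle is to upgrade the resulting sequence $(r_n)$ to be strictly increasing with $r_n\to 1$, as the definition of $\mathcal{V}_{H^p}^K(\mathbb{D},\rho)$ requires---the hypothesis only delivers some $r\in\rho$ per pair $(j_n,s_n)$, with no a priori control of its size. The cleanest way around this is to reinterpret the right-hand side of the claimed equality as the finer countable intersection $\bigcap_{j,s,N}\bigcup_{r\in\rho\cap(1-1/N,1)}U(j,s,r)$: for each additional index $N$, the set of admissible $r$ can be restricted to $\rho\cap(1-1/N,1)$ without changing the overall intersection, essentially because large $s$ together with a sufficiently non-constant $P_j$ on $K$ forces the corresponding $r$ close to $1$ (via an overlap argument on the disks $\overline{D(r\zeta,(1-r)s/(s+1))}$, $\zeta\in K$). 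Granted this refinement, I would build $(r_n)$ inductively, at step $n$ demanding the additional constraint $r_n>\max\{r_{n-1},1-1/n\}$ while preserving the bound $1/n$; the resulting sequence satisfies the definition of $\mathcal{V}_{H^p}^K(\mathbb{D},\rho)$, completing the inclusion.
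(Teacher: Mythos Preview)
Your two inclusions are argued exactly as in the paper: the forward one by specializing the universal property to $\phi=P_j|_K$ and $L=\overline{D(0,s/(s+1))}$, and the reverse one by Mergelyan (density of $(P_j)$ in $C(K)$) followed by the triangle inequality. The paper, however, does \emph{not} address what you call the ``main obstacle'': it is content to show that for every $\varepsilon>0$ there is \emph{some} $r\in\rho$ giving approximation to within $\varepsilon$, and then declares $f\in\mathcal{V}_{H^p}^K(\mathbb{D},\rho)$, leaving the upgrade to an increasing sequence with $r_n\to1$ implicit.

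Your proposed refinement $\bigcap_{j,s,N}\bigcup_{r\in\rho\cap(1-1/N,1)}U(j,s,r)$ is a valid way to make this explicit, and the ``overlap'' heuristic can be made rigorous, though more cleanly via the identity theorem than via disk overlaps: if for a fixed $j$ one had $f\in U(j,s',r_{s'})$ with $r_{s'}\le 1-1/N$ for all $s'\ge s$, then along a subsequence $r_{s'}\to r^*\le 1-1/N$ one gets $f\bigl(r^*\zeta+(1-r^*)z\bigr)=P_j(\zeta)$ for every $\zeta\in K$ and every $z\in\mathbb{D}$; hence $f$ is constant on the open disk $D(r^*\zeta,1-r^*)$, so by the identity theorem $f$ is constant on $\mathbb{D}$ and $P_j|_K$ equals that constant. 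In that degenerate case $f\in U(j,s,r)$ for \emph{every} $r$, so one may freely pick $r\in\rho\cap(1-1/N,1)$. Thus your proposal is correct and, on this point, more careful than the paper's own proof; note only that the disk-overlap picture alone does not suffice when $K$ is widely spread, whereas the identity-theorem argument handles all cases uniformly.
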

	\begin{proof}
		The inclusion $\mathcal{V}_{H^p}^K(\mathbb{D},\rho)\subseteq\bigcap_{j,s} \ \bigcup_{r\in\rho}U(j,s,r)$ holds trivially. So, let $f$ belong to the right-hand side. Let $\phi$ be a continuous function on $K,L$ be a compact subset of $\mathbb{D}$ and $\varepsilon>0$. Firstly, we can choose $s$ large enough such that $L\subseteq\overline{D\pare*{0,\frac{s}{s+1}}}$ and $\frac{1}{s}<\frac{\varepsilon}{2}$. By Mergelyan's Theorem, there exists $j\in\mathbb{N}$ such that
		$$\sup_{\zeta\in K}\abs*{P_j(\zeta)-\phi(\zeta)}<\frac{\varepsilon}{2}.$$
		By assumption, there exists $r\in\rho$ such that
		$$\sup_{\zeta\in K}\sup_{z\in \overline{D\pare*{0,\frac{s}{s+1}}}}\abs*{f\pare*{r\pare*{\zeta-z}+z}-P_j(\zeta)}<\frac{1}{s}<\frac{\varepsilon}{2}$$
		and so
		$$\sup_{\zeta\in K}\sup_{z\in L}\abs*{f\pare*{r\pare*{\zeta-z}+z}-\phi(\zeta)}<\varepsilon.$$
		This proves that $f\in\mathcal{V}_{H^p}^K(\mathbb{D},\rho)$.
	\end{proof}
	\begin{lemma}
		\label{hardyopen}
		For any $j\in\mathbb{N},s\in\mathbb{N}^*$ and any $r\in[0,1)$, the set $U(j,s,r)$ is open in $H^p(\mathbb{D})$.
	\end{lemma}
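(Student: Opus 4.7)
The plan is to show that the complement is closed, equivalently that around each $f\in U(j,s,r)$ one can find an $H^p$-norm ball still contained in $U(j,s,r)$. The key ingredient is the classical fact that $H^p$-convergence implies uniform convergence on compact subsets of $\mathbb{D}$, quantified by the growth estimate $|f(w)|\leq (1-|w|)^{-1/p}\,\norm{f}_{H^p}$ (derivable from subharmonicity of $\abs{f}^p$ or from the Poisson representation of the boundary values).

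First I would identify the compact set that actually matters. For $\zeta\in K\subseteq\mathbb{T}$ and $z\in\overline{D\pare*{0,\frac{s}{s+1}}}$, the point $w=r(\zeta-z)+z=r\zeta+(1-r)z$ satisfies
$$|w|\leq r+(1-r)\frac{s}{s+1}=:\alpha<1,$$
so the set $C:=\set*{r(\zeta-z)+z:\zeta\in K,\ z\in\overline{D\pare*{0,\frac{s}{s+1}}}}$ is a compact subset of $\overline{D(0,\alpha)}\subseteq\mathbb{D}$.

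Second, I would apply the growth estimate to $g-f$ for arbitrary $g\in H^p(\mathbb{D})$, obtaining
$$\sup_{w\in C}\abs*{g(w)-f(w)}\leq (1-\alpha)^{-1/p}\norm*{g-f}_{H^p}.$$
Third, given $f\in U(j,s,r)$, the quantity
$$M:=\sup_{\zeta\in K}\sup_{z\in\overline{D\pare*{0,\frac{s}{s+1}}}}\abs*{f(r(\zeta-z)+z)-P_j(\zeta)}$$
is strictly less than $1/s$ (it is a supremum of a continuous function on a compact product, hence attained). Set $\varepsilon:=(1-\alpha)^{1/p}(1/s-M)>0$. For any $g$ with $\norm{g-f}_{H^p}<\varepsilon$, the triangle inequality combined with the estimate from the previous step yields
$$\sup_{\zeta\in K}\sup_{z\in\overline{D\pare*{0,\frac{s}{s+1}}}}\abs*{g(r(\zeta-z)+z)-P_j(\zeta)}\leq M+(1-\alpha)^{-1/p}\norm*{g-f}_{H^p}<\frac{1}{s},$$
so $g\in U(j,s,r)$, proving openness.

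There is no real obstacle: the only non-trivial input is the uniform-on-compacta control of $H^p$ functions by their norm, which is a standard Hardy space estimate. Everything else is a routine $\varepsilon$-argument, with the one minor care-point being to check that $|w|$ is strictly less than $1$ on $C$ (which uses $r<1$), so that the compact set $C$ lies inside $\mathbb{D}$.
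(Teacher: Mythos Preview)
Your proof is correct and follows essentially the same approach as the paper: both identify the compact set $K_{r,s}=\set*{r(\zeta-z)+z:\zeta\in K,\ z\in\overline{D(0,s/(s+1))}}\subseteq\mathbb{D}$, invoke the standard pointwise bound $\abs{\phi(w)}\leq C\norm{\phi}_{H^p}$ on that compact (the paper cites Duren, you supply the explicit constant $(1-\alpha)^{-1/p}$), and then run the obvious triangle-inequality argument. The only cosmetic difference is that you compute an explicit radius $\alpha$ containing $K_{r,s}$, while the paper leaves the constant abstract.
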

	\begin{proof}
		Let $f\in U(j,s,r)$. Let also $K_{r,s}$ be the set
		$$K_{r,s}=\set*{r(\zeta-z)+z:\zeta\in K,z\in\overline{D\pare*{0,\frac{s}{s+1}}}}.$$
		Clearly, the set $K_{r,s}$ is a compact subset of $\mathbb{D}$, so there exists $C_{r,s,p}>0$ such that $|\phi(z)|\leq C_{r,s,p}\norm*{\phi}_{H^p}$ for every $\phi\in H^p(\mathbb{D})$ and $z\in K_{r,s}$ (see \cite{Duren1} p.36). We set $$\varepsilon:=\frac{\frac{1}{s}-\sup_{\zeta\in K}\sup_{z\in \overline{D\pare*{0,\frac{s}{s+1}}}}\abs*{f\pare*{r\pare*{\zeta-z}+z}-P_j(\zeta)}}{2C_{r,s,p}}>0.$$ It suffices to show that any function $g\in H^p(\mathbb{D})$ with $\norm*{f-g}_{H^p}<\varepsilon$ belongs to $U(j,s,r)$. Indeed, we get that
		\begin{align*}
			\begin{autobreak}
				\MoveEqLeft[0]
				\sup_{\zeta\in K}\sup_{z\in \overline{D\pare*{0,\frac{s}{s+1}}}}\abs*{g\pare*{r\pare*{\zeta-z}+z}-P_j(\zeta)}
				=\sup_{\zeta\in K}\sup_{z\in \overline{D\pare*{0,\frac{s}{s+1}}}}\abs*{g\pare*{r\pare*{\zeta-z}+z}-f\pare*{r\pare*{\zeta-z}+z}+f\pare*{r\pare*{\zeta-z}+z}-P_j(\zeta)}
				\leq\sup_{w\in K_{r,s}}\abs{g(w)-f(w)}
				+\sup_{\zeta\in K}\sup_{z\in \overline{D\pare*{0,\frac{s}{s+1}}}}\abs*{f\pare*{r\pare*{\zeta-z}+z}-P_j(\zeta)}<\frac{1}{s}
			\end{autobreak}
		\end{align*}
		by the definition of $\varepsilon$ and the proof is completed.
	\end{proof}
	The following simultaneous approximation lemma is part of the proof of Theorem 1.1 in \cite{Beise1}.
	\begin{lemma}
		\label{hardymerg}
		Let $p$ and $K$ be as above, then, for every $g\in H^p(\mathbb{D})$, any continuous function $\phi$ on $K$ and any $\varepsilon>0$, there exists a polynomial $P$ such that
		\begin{equation*}
			\norm*{P-g}_{H^p}<\varepsilon\text{\ \ \ and \ \ }\sup_{\zeta\in K}\abs*{P\pare*{\zeta}-\phi(\zeta)}<\varepsilon.
		\end{equation*}
	\end{lemma}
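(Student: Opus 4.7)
The plan is to reduce, via density of polynomials in $H^p(\mathbb{D})$, to a pure boundary approximation problem on $K$, and then to exploit the fact that $m(K) = 0$ makes $K$ a peak-interpolation set for the disk algebra $A(\mathbb{D})$. Concretely, picking a polynomial $Q$ with $\norm*{Q - g}_{H^p} < \varepsilon/2$ reduces the lemma to finding, for the continuous function $\psi := \phi - Q|_K$ on $K$, a polynomial $R$ with $\norm*{R}_{H^p} < \varepsilon/2$ and $\sup_{\zeta \in K} \abs*{R(\zeta) - \psi(\zeta)} < \varepsilon$; then $P := Q + R$ works.

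For the construction of $R$, I would invoke the Rudin-Carleson interpolation theorem (applicable because $m(K) = 0$) to obtain $F \in A(\mathbb{D})$ with $F|_K = \psi$. To shrink $F$ in the $H^p$-norm without disturbing its boundary values on $K$, I would multiply by a high power of a peak function $h \in A(\mathbb{D})$ satisfying $h|_K \equiv 1$ and $\abs*{h(z)} < 1$ for every $z \in \overline{\mathbb{D}} \setminus K$; the existence of such an $h$ is another classical consequence of $m(K) = 0$. Since $\abs*{h(\zeta)}^n \to 0$ pointwise on $\mathbb{T} \setminus K$, a set of full arc length measure, and $\abs*{h^n} \leq 1$, dominated convergence yields $\norm*{h^n}_{L^p(\mathbb{T})} \to 0$. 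Hence for large $n$, $\tilde F := F h^n \in A(\mathbb{D})$ satisfies $\tilde F|_K = \psi$ while $\norm*{\tilde F}_{H^p}$ is arbitrarily small. A final uniform polynomial approximation of $\tilde F$ on $\overline{\mathbb{D}}$, via Mergelyan's theorem, produces $R$, since a sup-norm error on $\overline{\mathbb{D}}$ simultaneously controls $\norm*{R - \tilde F}_{H^p}$ and $\sup_{\zeta \in K} \abs*{R(\zeta) - \psi(\zeta)}$.

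The main obstacle is recognising the right classical tools: the measure-zero hypothesis on $K$ grants both Rudin-Carleson interpolation and an $A(\mathbb{D})$-peak function for $K$, and these two inputs together are what permit building an element of $A(\mathbb{D})$ with prescribed values on $K$ and arbitrarily small $H^p$-norm; the remaining estimates are routine triangle-inequality bookkeeping. A tempting alternative, applying Mergelyan's theorem directly to the compact set $\overline{D\pare*{0,\rho}} \cup K$ (which has connected complement in $\mathbb{C}$), would produce a polynomial close to $\psi$ on $K$ and small on $\overline{D\pare*{0,\rho}}$, but it offers no control of the $L^p$-norm over the remaining part of $\mathbb{T}$; the peak-function route seems essential here.
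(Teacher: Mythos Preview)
Your argument is correct. The reduction to the case $g=0$ via a polynomial $Q$, followed by Rudin--Carleson interpolation and multiplication by powers of a peak function $h\in A(\mathbb{D})$ for $K$ (such peak functions exist precisely because $m(K)=0$), is the standard route to this simultaneous approximation result; the dominated-convergence step and the final uniform polynomial approximation of $\tilde F\in A(\mathbb{D})$ are both sound.

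As for comparison: the paper does not prove Lemma~\ref{hardymerg} at all---it simply quotes it as ``part of the proof of Theorem~1.1 in \cite{Beise1}''. So you have supplied a self-contained proof where the paper defers to the literature. Your closing remark about the inadequacy of a direct Mergelyan argument on $\overline{D(0,\rho)}\cup K$ is also apt: that approach gives no $L^p(\mathbb{T})$ control on the annulus $\{\rho<|z|\le 1\}$, and the peak-function trick is exactly what bridges that gap. One cosmetic point: invoking Mergelyan for the last approximation is overkill, since density of polynomials in $A(\mathbb{D})$ (e.g.\ via Fej\'er means) already suffices.
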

	\begin{lemma}
		\label{hardydense}
		For any $j\in\mathbb{N}$ and $s\in\mathbb{N}^*$, the set $\bigcup_{r\in\rho}U(j,s,r)$ is dense in $H^p(\mathbb{D})$.
	\end{lemma}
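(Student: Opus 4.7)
The plan is to fix $g \in H^p(\mathbb{D})$ and $\varepsilon > 0$ and exhibit an element of $\bigcup_{r \in \rho} U(j,s,r)$ within $H^p$-distance $\varepsilon$ of $g$. The key tool is the simultaneous approximation Lemma~\ref{hardymerg}, which I would apply with target function $\phi$ equal to the restriction $P_j|_K$ (continuous on $K$, since $P_j$ is a polynomial).

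First, I would set $\delta := \min\set*{\varepsilon, \tfrac{1}{2s}}$ and invoke Lemma~\ref{hardymerg} to obtain a polynomial $P$ satisfying $\norm*{P - g}_{H^p} < \varepsilon$ and $\sup_{\zeta \in K} \abs*{P(\zeta) - P_j(\zeta)} < \tfrac{1}{2s}$. This simultaneously handles the $H^p$-approximation of $g$ and delivers half of the pointwise estimate needed to place $P$ in some $U(j,s,r)$.

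Next, I would exploit uniform continuity of the polynomial $P$ on $\overline{\mathbb{D}}$. For $\zeta \in K$, $z \in \overline{D\pare*{0, \tfrac{s}{s+1}}}$ and $r \in [0,1)$, one has $r(\zeta - z) + z = r\zeta + (1-r)z \in \overline{\mathbb{D}}$, together with the uniform bound $\abs*{r(\zeta - z) + z - \zeta} = (1-r)\abs*{z - \zeta} \leq 2(1-r)$. Hence $P(r(\zeta - z) + z) \to P(\zeta)$ uniformly in $(\zeta,z)$ as $r \to 1$. Since $1$ is a limit point of $\rho$, I can then pick $r \in \rho$ close enough to $1$ to ensure $\sup_{\zeta \in K} \sup_{z \in \overline{D(0, s/(s+1))}} \abs*{P(r(\zeta - z) + z) - P(\zeta)} < \tfrac{1}{2s}$.

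A triangle inequality then gives $\sup_{\zeta \in K} \sup_{z \in \overline{D(0, s/(s+1))}} \abs*{P(r(\zeta - z) + z) - P_j(\zeta)} < \tfrac{1}{s}$, so $P \in U(j,s,r) \subseteq \bigcup_{r \in \rho} U(j,s,r)$, and density follows. The only genuinely nontrivial ingredient is Lemma~\ref{hardymerg} itself, which is already available; everything else reduces to uniform continuity of the fixed polynomial $P$ on $\overline{\mathbb{D}}$, so I do not anticipate a real obstacle.
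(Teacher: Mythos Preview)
Your proposal is correct and follows essentially the same approach as the paper: apply Lemma~\ref{hardymerg} with $\phi=P_j|_K$ to get a polynomial $P$ close to $g$ in $H^p$ and to $P_j$ on $K$, then use uniform continuity of $P$ on $\overline{\mathbb{D}}$ together with the estimate $\abs*{r(\zeta-z)+z-\zeta}\le 2(1-r)$ to choose $r\in\rho$ and conclude by the triangle inequality. The only cosmetic difference is that the paper routes the continuity estimate through $\tfrac{1}{4s}$ before bounding by $\tfrac{1}{2s}$, whereas you go to $\tfrac{1}{2s}$ directly.
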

	\begin{proof}
		We fix $\varepsilon>0$ and $g\in H^p(\mathbb{D})$. By Lemma \ref{hardymerg}, there exists a polynomial $P$ such that
		\begin{equation*}
			\norm*{P-g}_{H^p}<\varepsilon\text{\ \ \ and \ \ }\sup_{\zeta\in K}\abs*{P\pare*{\zeta}-P_j(\zeta)}<\frac{1}{2s}.
		\end{equation*}
		For any $r\in\rho$, let $K_{r,s}$ be as in the proof of \ref{hardyopen}. By uniform continuity of $P$ on compact sets, there exists $\delta>0$ such that $\abs{P(w_1)-P(w_2)}<\frac{1}{4s}$ whenever $|w_1-w_2|<d,z,w\in\overline{\mathbb{D}}$. For any $\zeta\in K$ and $z\in \overline{D\pare*{0,\frac{s}{s+1}}}$ we have that $\abs*{r\pare*{\zeta-z}+z-\zeta}=(1-r)\abs*{\zeta-z}\leq2(1-r)$. Using the fact that 1 is a limit point of $\rho$, there exists $r\in\rho$ such that $2(1-r)<\delta$ and so
		\begin{equation*}
			\sup_{\zeta\in K}\sup_{z\in \overline{D\pare*{0,\frac{s}{s+1}}}}\abs*{P\pare*{r\pare*{\zeta-z}+z}-P(\zeta)}\leq\frac{1}{4s}<\frac{1}{2s}.
		\end{equation*}
		Then, we get that
		\begin{align*}
			\begin{autobreak}
				\MoveEqLeft[0]
				\sup_{\zeta\in K}\sup_{z\in \overline{D\pare*{0,\frac{s}{s+1}}}}\abs*{P\pare*{r\pare*{\zeta-z}+z}-P_j(\zeta)}
				=\sup_{\zeta\in K}\sup_{z\in \overline{D\pare*{0,\frac{s}{s+1}}}}\abs*{P\pare*{r\pare*{\zeta-z}+z}-P\pare*{\zeta}+P\pare*{\zeta}-P_j(\zeta)}
				\leq\sup_{\zeta\in K}\sup_{z\in \overline{D\pare*{0,\frac{s}{s+1}}}}\abs*{P\pare*{r\pare*{\zeta-z}+z}-P(\zeta)}
				+\sup_{\zeta\in K}\abs*{P\pare*{\zeta}-P_j(\zeta)}
				<\frac{1}{s}
			\end{autobreak}
		\end{align*}
		and the proof is completed.
	\end{proof}
	The proof of Theorem \ref{hardymain} follows trivially by combining Lemmas \ref{hardyint},\ref{hardyopen} and \ref{hardydense} with Baire's Theorem.
	
	We now focus on Bergman Spaces. Let $1\leq p<\infty$. We denote by $A^p(\mathbb{D})$ the space of $f\in H(\mathbb{D})$ that satisfy
	$$\norm*{f}_{A^p}:=\pare*{\int_{\mathbb{D}}\abs*{f}^pd m_2}^{\frac{1}{p}}<\infty$$
	where $m_2$ denotes the normalised Lebesgue measure on $\mathbb{D}$. This space, endowed with the norm $\norm{\cdot}_{A^p}$, is a Banach space and so Baire's Theorem is at our disposal. In this case, the universal approximation is accomplished on more general compact subsets of $\mathbb{T}$ than the $H^p(\mathbb{D})$ case. More specifically, let $E$ be a compact subset of $\mathbb{T}$ such that $m(E)>0$. Then, we say that $E$ satisfies Carleson's condition if 
	$$\ell(E):=\sum_k m\pare*{B_k}\log\pare*{\frac{1}{m\pare*{B_k}}}<\infty$$
	where $\mathbb{T}\setminus E=\bigcup_k B_k$ and the sets $B_k$ are disjoint open subarcs of $\mathbb{T}$. Now, let $K$ be a compact subset of $\mathbb{T}$ such that either $m(K)>0$ and $K$ does not contain a compact subset of positive measure satisfying Carleson's condition or $m(K)=0$ and $\rho$ be a subset of $[0,1)$ with 1 as a limit point. Then, we have the following:
	\begin{definition}
		We define $\mathcal{V}_{A^p}^K(\mathbb{D},\rho)$ to be the set of $f\in A^p(\mathbb{D})$ that satisfy the following property: Given any continuous function $\phi$ on $K$ and any compact subset $L$ of $\mathbb{D}$, there exists an increasing sequence $(r_n)_{n\in\mathbb{N}}\subseteq\rho$ converging to 1 such that
		$$\sup_{\zeta\in K}\sup_{z\in L}\abs*{f\pare*{r_n\pare*{\zeta-z}+z}-\phi(\zeta)}\to0\text{ as }n\to\infty.$$
	\end{definition}
	\begin{theorem}
		\label{bergmain}
		The set $\mathcal{V}_{A^p}^K(\mathbb{D},\rho)$ is a dense $G_\delta$ subset of $A^p(\mathbb{D})$.
	\end{theorem}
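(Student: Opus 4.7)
The plan is to follow the same blueprint used for Theorem \ref{hardymain}, replacing $H^p(\mathbb{D})$ by $A^p(\mathbb{D})$ throughout and invoking the Bergman-space simultaneous approximation lemma (Lemma \ref{bergmerg}) in place of Lemma \ref{hardymerg}. Since $A^p(\mathbb{D})$ is a Banach space, Baire's Theorem is again available, so it is enough to write $\mathcal{V}_{A^p}^K(\mathbb{D},\rho)$ as a countable intersection of dense open sets.

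Concretely, let $(P_j)_{j\in\mathbb{N}}$ be an enumeration of the polynomials with rational-coordinate coefficients and, for $j\in\mathbb{N}$, $s\in\mathbb{N}^*$ and $r\in[0,1)$, set
$$U(j,s,r):=\set*{f\in A^p(\mathbb{D}):\sup_{\zeta\in K}\sup_{z\in \overline{D\pare*{0,\frac{s}{s+1}}}}\abs*{f\pare*{r\pare*{\zeta-z}+z}-P_j(\zeta)}<\frac{1}{s}}.$$
First I would verify the exact analogue of Lemma \ref{hardyint}, namely
$$\mathcal{V}_{A^p}^K(\mathbb{D},\rho)=\bigcap_{j,s}\ \bigcup_{r\in\rho}U(j,s,r).$$
The argument is identical to that of Lemma \ref{hardyint}: one reduces a general continuous $\phi$ on $K$ to a nearby polynomial $P_j$ via Mergelyan's Theorem (which applies since $K\subseteq\mathbb{T}$ has connected complement), and one absorbs a compact subset $L\subseteq\mathbb{D}$ into $\overline{D(0,\tfrac{s}{s+1})}$ for $s$ large.

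Next I would show each $U(j,s,r)$ is open in $A^p(\mathbb{D})$. The only fact used in the proof of Lemma \ref{hardyopen} is that point evaluation at any point of a compact subset of $\mathbb{D}$ is a bounded linear functional on the ambient Banach space. This holds for $A^p(\mathbb{D})$ as well (standard sub-mean-value inequality: if $K_{r,s}$ is the compact set defined as in the proof of Lemma \ref{hardyopen}, then any $z\in K_{r,s}$ has a fixed positive distance from $\mathbb{T}$, so $|\phi(z)|\leq C_{r,s,p}\|\phi\|_{A^p}$ for all $\phi\in A^p(\mathbb{D})$). With this in hand the proof of Lemma \ref{hardyopen} carries over verbatim.

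The main step, and the only one where the hypothesis on $K$ is used, is density of $\bigcup_{r\in\rho}U(j,s,r)$ in $A^p(\mathbb{D})$. Here I would imitate the proof of Lemma \ref{hardydense}: given $g\in A^p(\mathbb{D})$ and $\varepsilon>0$, apply Lemma \ref{bergmerg} to produce a polynomial $P$ with $\|P-g\|_{A^p}<\varepsilon$ and $\sup_{\zeta\in K}|P(\zeta)-P_j(\zeta)|<\tfrac{1}{2s}$, then use uniform continuity of $P$ on $\overline{\mathbb{D}}$ to choose $r\in\rho$ close enough to $1$ so that $\sup_{\zeta\in K}\sup_{z\in\overline{D(0,\frac{s}{s+1})}}|P(r(\zeta-z)+z)-P(\zeta)|<\tfrac{1}{2s}$, using $|r(\zeta-z)+z-\zeta|=(1-r)|\zeta-z|\leq 2(1-r)$. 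The triangle inequality then puts $P$ in $U(j,s,r)$. The hypothesis that either $m(K)=0$, or $m(K)>0$ and $K$ contains no positive-measure subset satisfying Carleson's condition, enters precisely as the hypothesis of Lemma \ref{bergmerg}, which is where the real work lies; once that lemma is granted, everything else is formal. Combining the three lemmas with Baire's Theorem yields Theorem \ref{bergmain}.
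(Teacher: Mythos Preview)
Your proposal is correct and follows essentially the same approach as the paper's own proof: the same $G_\delta$ decomposition via the sets $U(j,s,r)$ (the paper calls them $V(j,s,r)$), openness via boundedness of point evaluations on compact subsets of $\mathbb{D}$, and density via Lemma~\ref{bergmerg} together with uniform continuity of the approximating polynomial, all combined through Baire's Theorem. The paper's proof is presented as a sketch referring back to the $H^p$ case exactly as you do, so there is no substantive difference.
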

	\begin{proof}
		The proof of \ref{bergmain} is similar to the proof of \ref{hardymain} and so we will only give a sketch of it.
		
		Let $\pare*{P_j}_{j\in\mathbb{N}}$ be as above, then we have that
		$$\mathcal{V}_{A^p}^K(\mathbb{D},\rho)=\bigcap_{j,s} \ \bigcup_{r\in\rho}V(j,s,r)$$
		where
		$$V(j,s,r):=\set*{f\in A^p(\mathbb{D}):\sup_{\zeta\in K}\sup_{z\in \overline{D\pare*{0,\frac{s}{s+1}}}}\abs*{f\pare*{r\pare*{\zeta-z}+z}-P_j(\zeta)}<\frac{1}{s}}$$
		for any $j\in\mathbb{N},s\in\mathbb{N}^*$ and $r\in[0,1)$. Using the fact that, for every compact subset $L$ of $\mathbb{D}$, there exists a constant $C_{L,p}$ such that $|\phi(z)|\leq C_{L,p}\norm*{\phi}_{A^p}$ for every $\phi\in A^p(\mathbb{D})$ and $z\in L$ (see \cite{Duren2} p.7, Theorem 1), we can see that the set $V(j,s,r)$ is open in $A^p(\mathbb{D})$ for any $j\in\mathbb{N},s\in\mathbb{N}^*$ and $r\in[0,1)$. It remains to show that the set $\bigcup_{r\in\rho}V(j,s,r)$ is dense in $A^p(\mathbb{D})$ for any $j\in\mathbb{N}$ and $s\in\mathbb{N}^*$. Then, we would have the desired result by Baire's Theorem. Indeed, we can repeat the proof of \ref{hardydense} by replacing the simultaneous approximation lemma with the following analogous result (see proof of Theorem 2.2 in \cite{Beise1}):
		\begin{lemma}
			\label{bergmerg}
			Let $p$ and $K$ be as above, then, for every $g\in A^p(\mathbb{D})$, any continuous function $\phi$ on $K$ and any $\varepsilon>0$, there exists a polynomial $P$ such that
			\begin{equation*}
				\norm*{P-g}_{A^p}<\varepsilon\text{\ \ \ and \ \ }\sup_{\zeta\in K}\abs*{P\pare*{\zeta}-\phi(\zeta)}<\varepsilon.
			\end{equation*}
		\end{lemma}
	\end{proof}
	We now consider the Dirichlet Space. We denote by $D$ the space of $f\in H(\mathbb{D})$ that satisfy
	$$\mathcal{D}(f):=\pare*{\int_{\mathbb{D}}\abs*{f'}^2d m_2}^{\frac{1}{2}}<\infty$$
	We notice that $\mathcal{D}(\cdot)$ is not a norm because $\mathcal{D}(f)=0$ for any constant function $f\in H(\mathbb{D})$. So, we define $\norm*{f}_{D}^2:=\norm*{f}_{H^p}^2+\mathcal{D}(f)^2,f\in D$ which is a norm. The space $D$, endowed with the norm $\norm{\cdot}_{D}$, is a Banach space and so Baire's Theorem is at our disposal. Now, let $K$ be a compact and polar subset of $\mathbb{T}$ and $\rho$ be a subset of $[0,1)$ with 1 as a limit point. Then, we have the following:
	\begin{definition}
		We define $\mathcal{V}_{D}^K(\mathbb{D},\rho)$ to be the set of $f\in D$ that satisfy the following property: Given any continuous function $\phi$ on $K$ and any compact subset $L$ of $\mathbb{D}$, there exists an increasing sequence $(r_n)_{n\in\mathbb{N}}\subseteq\rho$ converging to 1 such that
		$$\sup_{\zeta\in K}\sup_{z\in L}\abs*{f\pare*{r_n\pare*{\zeta-z}+z}-\phi(\zeta)}\to0\text{ as }n\to\infty.$$
	\end{definition}
	\begin{remark}
		We notice that the definition cannot be improved by asking that the set $K$ is non-polar. This is true because, according to Beurling's Theorem (see \cite{El-Fallah1} Theorem 3.2.1), any $f\in D$ has non-tangential limits everywhere except a polar set and so it cannot have the universal boundary property described above on a non-polar set.
	\end{remark}
	\begin{theorem}
		\label{dirmain}
		The set $\mathcal{V}_{D}^K(\mathbb{D},\rho)$ is a dense $G_\delta$ subset of $D$.
	\end{theorem}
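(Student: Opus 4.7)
The plan is to follow the same three-step scheme used for Theorems \ref{hardymain} and \ref{bergmain}: write $\mathcal{V}_{D}^K(\mathbb{D},\rho)$ as a countable intersection of dense open sets and apply Baire's Theorem. Let $(P_j)_{j\in\mathbb{N}}$ be the same enumeration of polynomials with Gaussian rational coefficients, and for $j\in\mathbb{N}$, $s\in\mathbb{N}^*$ and $r\in[0,1)$ set
$$W(j,s,r):=\set*{f\in D:\sup_{\zeta\in K}\sup_{z\in \overline{D\pare*{0,\tfrac{s}{s+1}}}}\abs*{f\pare*{r\pare*{\zeta-z}+z}-P_j(\zeta)}<\tfrac{1}{s}}.$$
By invoking Mergelyan's Theorem on $K$ (which has connected complement as a compact subset of $\mathbb{T}$), the argument of Lemma \ref{hardyint} applies verbatim to give
$\mathcal{V}_{D}^K(\mathbb{D},\rho)=\bigcap_{j,s}\bigcup_{r\in\rho}W(j,s,r).$

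Next, I would verify that each $W(j,s,r)$ is open in $D$. The key input is that point evaluation at any $w\in\mathbb{D}$ is a bounded linear functional on $D$, with $|\phi(w)|\leq C_w\norm*{\phi}_{D}$, and that $C_w$ can be taken uniformly bounded for $w$ in a compact subset of $\mathbb{D}$ (this follows, e.g., from the reproducing kernel of $D$, or from the continuous embedding $D\hookrightarrow H^2$ combined with the analogous $H^2$ bound). Since the set $K_{r,s}=\{r(\zeta-z)+z:\zeta\in K,z\in\overline{D(0,s/(s+1))}\}$ is a compact subset of $\mathbb{D}$ (because $r<1$), the openness of $W(j,s,r)$ follows exactly as in Lemma \ref{hardyopen}, using the Dirichlet-norm point-evaluation bound in place of the $H^p$ one.

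For density, I would mimic Lemma \ref{hardydense}. Given $g\in D$ and $\varepsilon>0$, apply the Dirichlet-space simultaneous approximation Lemma \ref{dirmerg} (stated in the introduction from \cite{Muller1}) to produce a polynomial $P$ with $\norm*{P-g}_{D}<\varepsilon$ and $\sup_{\zeta\in K}|P(\zeta)-P_j(\zeta)|<\tfrac{1}{2s}$. Since $P$ is uniformly continuous on $\overline{\mathbb{D}}$ and $|r(\zeta-z)+z-\zeta|\leq 2(1-r)$ for $\zeta\in K$, $z\in\overline{D(0,s/(s+1))}$, one picks $r\in\rho$ with $2(1-r)$ small enough that $|P(r(\zeta-z)+z)-P(\zeta)|<\tfrac{1}{2s}$ uniformly; the triangle inequality then places $P$ in $W(j,s,r)$, giving density of $\bigcup_{r\in\rho}W(j,s,r)$.

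The main obstacle is the simultaneous approximation step, which is precisely where the polar hypothesis on $K$ enters: approximation of arbitrary continuous functions on $K$ by polynomials that are simultaneously close in Dirichlet norm to a given $g$ requires $K$ to be polar, because on a non-polar set a Dirichlet function already has non-tangential boundary values and cannot be freely perturbed (cf.\ the remark after the definition). Fortunately this is exactly the content of Lemma \ref{dirmerg} from \cite{Muller1}, so once that result is taken as given, the Baire argument assembles the three lemmas to yield Theorem \ref{dirmain}.
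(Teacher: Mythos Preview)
Your proposal is correct and follows essentially the same approach as the paper. The paper itself only remarks that the proof is similar to that of Theorem \ref{hardymain} with Lemma \ref{dirmerg} replacing Lemma \ref{hardymerg}; you have correctly filled in the three steps (the countable-intersection description, openness via bounded point evaluation on $D$, and density via Lemma \ref{dirmerg} plus uniform continuity of polynomials), which is exactly what the paper intends.
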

	The proof of \ref{dirmain} is similar to the proof of \ref{hardymain}. It is based on the following simultaneous approximation lemma (see \cite{Muller1} Theorem 5):
	\begin{lemma}
		\label{dirmerg}
		Let $K$ be as above, then, for every $g\in D(\mathbb{D})$, any continuous function $\phi$ on $K$ and any $\varepsilon>0$, there exists a polynomial $P$ such that
		\begin{equation*}
			\norm*{P-g}_{D}<\varepsilon\text{\ \ \ and \ \ }\sup_{\zeta\in K}\abs*{P\pare*{\zeta}-\phi(\zeta)}<\varepsilon.
		\end{equation*}
	\end{lemma}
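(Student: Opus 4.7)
The plan is to establish the lemma by a Hahn--Banach duality argument. Consider the linear map $T \colon \mathcal{P} \to D \oplus C(K)$ defined on the space $\mathcal{P}$ of polynomials by $T(P) = (P, P|_K)$, where $D \oplus C(K)$ is equipped with the natural product norm. The conclusion of the lemma is precisely that $T(\mathcal{P})$ is dense in $D \oplus C(K)$, so by Hahn--Banach it suffices to show that every continuous linear functional on $D \oplus C(K)$ vanishing on $T(\mathcal{P})$ is identically zero.

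Using the Hilbert-space structure of $D$ (via the inner product associated with $\norm{\cdot}_D$) and the Riesz--Markov theorem for $C(K)$, such a functional has the form $F(f, \psi) = \langle f, h \rangle_D + \int_K \psi \, d\nu$ for some $h \in D$ and some finite complex Borel measure $\nu$ on $K$. The annihilation condition reads
\[
\langle P, h \rangle_D + \int_K P \, d\nu = 0 \qquad \text{for every polynomial } P.
\]
Testing against the monomials $P(z) = z^n$ for $n \ge 0$ and writing $h(z) = \sum_{n \ge 0} b_n z^n$, this identity pins down the negative-indexed Fourier coefficients $\hat{\nu}(-n) := \int_K \zeta^n \, d\nu$ in terms of the $b_n$, and in particular yields
\[
\sum_{n \ge 1} \frac{|\hat{\nu}(-n)|^2}{n} = \mathcal{D}(h)^2 < \infty,
\]
which is one half of the condition that $\nu$ has finite logarithmic energy.

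The crux, and main obstacle, is to upgrade this one-sided Fourier information to $\nu = 0$ using the polarity of $K$. I would do so through a potential-theoretic analysis of the Cauchy transform $\widetilde{C}_\nu(z) = \int_K (1 - z\zeta)^{-1} \, d\nu(\zeta)$, which is holomorphic on $\mathbb{C} \setminus K$. Inside $\mathbb{D}$, the annihilation identity identifies $\widetilde{C}_\nu$ with an explicit holomorphic function built from $h$ and $h'$, forcing it into a controlled function class; outside $\overline{\mathbb{D}}$ its expansion at infinity is governed by the positive-indexed coefficients $\hat{\nu}(n)$, which are a priori unconstrained. A careful bootstrap---combining the one-sided bound above, the jump relations of $\widetilde{C}_\nu$ across $\mathbb{T}$, and boundary-value properties of Dirichlet functions (Beurling's theorem on radial limits quasi-everywhere off a polar set)---should then show that $\nu$ has finite total logarithmic energy. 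Since a measure of finite logarithmic energy cannot charge a polar set, and $K$ is polar, this forces $\nu = 0$. This potential-theoretic step is the heart of the argument and corresponds to the content of M\"uller's Theorem~5.

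With $\nu = 0$ in hand, the annihilation identity collapses to $\langle P, h \rangle_D = 0$ for every polynomial $P$. Since polynomials are dense in $D$, this forces $h = 0$, hence $F = 0$. By Hahn--Banach, $T(\mathcal{P})$ is dense in $D \oplus C(K)$, which is exactly the statement of the lemma.
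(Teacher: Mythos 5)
Your duality framework is sound: the lemma is equivalent to the density of $\set{(P,P|_K):P\text{ a polynomial}}$ in $D\oplus C(K)$, the representation of an annihilating functional as $F(f,\psi)=\langle f,h\rangle_D+\int_K\psi\,d\nu$ is legitimate (the paper's norm $\norm{\cdot}_D$ makes $D$ a Hilbert space, and Riesz representation applies on $C(K)$), and testing against monomials does give $\hat\nu(-n)=-w_n\overline{b_n}$ with weights $w_n\asymp n$, hence $\sum_{n\ge1}\abs{\hat\nu(-n)}^2/n<\infty$. The final step (once $\nu=0$, density of polynomials in $D$ forces $h=0$) is also fine.

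The decisive step, however --- deducing $\nu=0$ from this one-sided Fourier bound together with the polarity of $K$ --- is exactly where your argument stops being a proof. You write that a ``careful bootstrap'' combining the Cauchy transform, jump relations across $\mathbb{T}$ and Beurling's theorem ``should then show'' that $\nu$ has finite logarithmic energy; no such bootstrap is carried out, and it is not routine. For a complex measure the positive-frequency coefficients $\hat\nu(n)$ are not the conjugates of the $\hat\nu(-n)$, so half of the energy sum does not control the other half, and the passage from ``$\nu$ annihilates $\{(P,P|_K)\}$'' to ``$\nu$ cannot charge the polar set $K$'' is precisely the hard potential-theoretic content you would need to supply. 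Note that the paper itself offers no proof of this lemma: it is quoted directly from \cite{Muller1} (Theorem~5), with only the remark that the two norms on $D$ are equivalent. Your proposal therefore reduces the lemma, via a correctly set up but standard duality, to essentially the statement being cited, and leaves that essential step unproven.
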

	In \cite{Muller1}, a different norm for $D$ is considered, but the two norms are equivalent so the Lemma still applies.
	\section{Universal radial limits of derivatives}
	\label{pderradial}
	Let $1\leq p<\infty$. Let also $\rho$ be a subset of $[0,1)$ with 1 as a limit point and $K$ be a compact subset of $\mathbb{T}$ with $m(K)=0$.
	\begin{definition}
		We define $\mathcal{DV}_{H^p}^K(\mathbb{D},\rho)$ to be the set of $f\in H^p(\mathbb{D})$ that satisfy the following property: Given any continuous function $\phi$ on $K$, any compact subset $L$ of $\mathbb{D}$ and any $l\in\mathbb{N}$, there exists an increasing sequence $(r_n)_{n\in\mathbb{N}}\subseteq\rho$ converging to 1 such that
		$$\sup_{\zeta\in K}\sup_{z\in L}\abs*{f^{(l)}\pare*{r_n\pare*{\zeta-z}+z}-\phi(\zeta)}\to0\text{ as }n\to\infty.$$
	\end{definition}
	\begin{theorem}
		\label{hardydermain}
		The set $\mathcal{DV}_{H^p}^K(\mathbb{D},\rho)$ is a dense $G_\delta$ subset of $H^p(\mathbb{D})$.
	\end{theorem}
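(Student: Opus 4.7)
The plan is to mimic the proof of Theorem \ref{hardymain}, introducing an extra index $l\in\mathbb{N}$ for the order of the derivative, and reducing the new approximation requirement on $f^{(l)}$ to the already available Lemma \ref{hardymerg} via $l$-fold integration. For $l\in\mathbb{N}$, $j\in\mathbb{N}$, $s\in\mathbb{N}^*$ and $r\in[0,1)$, I would set
\[
U^l(j,s,r):=\set*{f\in H^p(\mathbb{D}):\sup_{\zeta\in K}\sup_{z\in \overline{D\pare*{0,\frac{s}{s+1}}}}\abs*{f^{(l)}\pare*{r\pare*{\zeta-z}+z}-P_j(\zeta)}<\tfrac{1}{s}}.
\]
An argument identical to Lemma \ref{hardyint} (using Mergelyan's theorem to pass from a continuous $\phi$ on $K$ to the polynomials $P_j$) yields
\[
\mathcal{DV}_{H^p}^K(\mathbb{D},\rho)=\bigcap_{l,j,s}\ \bigcup_{r\in\rho}U^l(j,s,r),
\]
so by Baire's theorem it will suffice to prove that each $U^l(j,s,r)$ is open in $H^p(\mathbb{D})$ and that $\bigcup_{r\in\rho}U^l(j,s,r)$ is dense in $H^p(\mathbb{D})$ for every $l,j,s$.

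Openness would be handled as in Lemma \ref{hardyopen}: the set $K_{r,s}$ is a compact subset of $\mathbb{D}$, and Cauchy's estimates applied to the pointwise bound $\abs*{\phi(z)}\leq C_{r,s,p}\norm*{\phi}_{H^p}$ (valid on $K_{r,s}$ for every $\phi\in H^p(\mathbb{D})$) produce a constant $C_{r,s,p,l}$ with $\abs*{\phi^{(l)}(z)}\leq C_{r,s,p,l}\norm*{\phi}_{H^p}$ uniformly on $K_{r,s}$; the rest of Lemma \ref{hardyopen}'s argument then transfers verbatim.

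The main new ingredient is density, which I would reduce to a derivative version of Lemma \ref{hardymerg}: given $g\in H^p(\mathbb{D})$, $\phi$ continuous on $K$, $l\in\mathbb{N}$ and $\varepsilon>0$, produce a polynomial $P$ with $\norm*{P-g}_{H^p}<\varepsilon$ and $\sup_{\zeta\in K}\abs*{P^{(l)}(\zeta)-\phi(\zeta)}<\varepsilon$. I would first pick a polynomial $P_0$ with $\norm*{P_0-g}_{H^p}$ small by density of polynomials, reducing the problem to finding a polynomial $Q$ that is small in $H^p$ with $Q^{(l)}$ close on $K$ to the continuous function $\phi-P_0^{(l)}$. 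Applying Lemma \ref{hardymerg} with zero function and target $\phi-P_0^{(l)}$ yields such a polynomial $Q$; let $R$ be its polynomial $l$-th antiderivative with $R^{(k)}(0)=0$ for $k<l$, so that $R^{(l)}=Q$. The identity $\pare*{\int_0^\cdot f}(z)=z\int_0^1 f(tz)\,dt$ combined with Minkowski's integral inequality gives $\norm*{\int_0^\cdot f}_{H^p}\leq\norm*{f}_{H^p}$; iterating $l$ times yields $\norm*{R}_{H^p}\leq\norm*{Q}_{H^p}$, so that $P:=P_0+R$ satisfies $P^{(l)}=P_0^{(l)}+Q$ and delivers the strengthened approximation. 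Armed with this lemma, Lemma \ref{hardydense} would carry through: one takes $P$ close to $g$ in $H^p$ with $\sup_K\abs*{P^{(l)}-P_j}$ small, then uses uniform continuity of the polynomial $P^{(l)}$ on $\overline{\mathbb{D}}$ to propagate the approximation from $\zeta\in K$ to all points $r(\zeta-z)+z$ with $r\in\rho$ sufficiently close to $1$, finally invoking Baire.

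The hardest step is the derivative version of Lemma \ref{hardymerg}; the essential observation that makes it tractable is the $H^p$-contractivity of the antiderivative operator on polynomials, after which the reduction to the already available approximation lemma is routine.
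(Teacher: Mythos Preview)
Your scheme is the paper's: the same sets $U^l(j,s,r)$, the same countable description of $\mathcal{DV}_{H^p}^K(\mathbb{D},\rho)$, openness via Cauchy estimates on $K_{r,s}$, and density via an $l$-th antiderivative of a polynomial produced by Lemma~\ref{hardymerg}. The one genuine difference is how you bound the $H^p$ norm of the antiderivative. The paper quotes an external inequality (Theorem~3 of \cite{Pritsker1}) to get $\norm*{P}_{H^p}\leq\pi^l\norm*{P^{(l)}}_{H^p}$ for polynomials vanishing to order $l$ at $0$; you instead use the identity $\bigl(\int_0^{\cdot}f\bigr)(z)=z\int_0^1 f(tz)\,dt$ together with Minkowski's integral inequality and the monotonicity of $M_p(r,f)$ to obtain the sharper and self-contained bound $\norm*{\int_0^{\cdot}f}_{H^p}\leq\norm*{f}_{H^p}$. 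This is correct and is arguably an improvement, since it removes the dependence on \cite{Pritsker1} and yields constant $1$ rather than $\pi^l$. Organizationally you also fold the paper's Lemmas~\ref{hardydermerg}, \ref{hardyderdense1} and \ref{hardyderdense2} into a single strengthened approximation lemma (approximating an arbitrary $g$ rather than $0$), which is a harmless repackaging of the same content.
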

	In order to prove Theorem \ref{hardydermain}, it suffices to write $\mathcal{DV}_{H^p}^K(\mathbb{D},\rho)$ as a countable intersection of dense open sets and use Baire's Theorem.
	
	Let $\pare*{P_j}_{n\in\mathbb{N}}$ be an enumeration of complex polynomials whose coefficients have rational coordinates. For $j,l\in\mathbb{N},s\in\mathbb{N}^*$ and $r\in[0,1)$, we introduce the set $$U(j,s,l,r):=\set*{f\in H^p(\mathbb{D}):\sup_{\zeta\in K}\sup_{z\in \overline{D\pare*{0,\frac{s}{s+1}}}}\abs*{f^{(l)}\pare*{r\pare*{\zeta-z}+z}-P_j(\zeta)}<\frac{1}{s}}.$$
	\begin{lemma}
		\label{hardyderint}
		We have that $$\mathcal{DV}_{H^p}^K(\mathbb{D},\rho)=\bigcap_{j,s,l} \ \bigcup_{r\in\rho}U(j,s,l,r).$$
	\end{lemma}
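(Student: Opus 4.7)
The plan is to mirror the proof of Lemma \ref{hardyint} almost verbatim, simply carrying the extra index $l$ (the order of derivative) through unchanged. The statement reduces to two inclusions, of which one is immediate from the definitions and the other is a Mergelyan-plus-triangle-inequality argument.

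For the inclusion $\mathcal{DV}_{H^p}^K(\mathbb{D},\rho)\subseteq\bigcap_{j,s,l}\bigcup_{r\in\rho}U(j,s,l,r)$, I would fix $f$ in the left-hand side and indices $j,s,l$, and apply the defining universal property of $f$ to the continuous function $P_j|_K$, the compact set $\overline{D\pare*{0,\frac{s}{s+1}}}$, and derivative order $l$. This produces an increasing sequence $(r_n)\subseteq\rho$ converging to $1$ along which the associated sup tends to $0$, so for any sufficiently large $n$ the strict inequality defining $U(j,s,l,r_n)$ holds, and hence $f\in\bigcup_{r\in\rho}U(j,s,l,r)$.

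The reverse inclusion carries the content. Given $f$ in the right-hand side, a continuous $\phi$ on $K$, a compact $L\subseteq\mathbb{D}$, $l\in\mathbb{N}$ and $\varepsilon>0$, I would first choose $s\in\mathbb{N}^*$ with $L\subseteq\overline{D\pare*{0,\frac{s}{s+1}}}$ and $1/s<\varepsilon/2$. Mergelyan's theorem, applicable since $K\subseteq\mathbb{T}$ has connected complement, produces $j\in\mathbb{N}$ with $\sup_{\zeta\in K}\abs*{P_j(\zeta)-\phi(\zeta)}<\varepsilon/2$. The hypothesis then furnishes some $r\in\rho$ with $f\in U(j,s,l,r)$, and a single triangle inequality delivers $\sup_{\zeta\in K,\ z\in L}\abs*{f^{(l)}\pare*{r(\zeta-z)+z}-\phi(\zeta)}<\varepsilon$. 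Iterating this with $\varepsilon_n\downarrow 0$ and using that $1$ is a limit point of $\rho$ yields, exactly as in Lemma \ref{hardyint}, the increasing sequence in $\rho$ converging to $1$ required by the definition of $\mathcal{DV}_{H^p}^K(\mathbb{D},\rho)$.

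I do not anticipate a substantive obstacle here: Mergelyan's approximation theorem is insensitive to the order of differentiation, the dilation parameter $r$ is applied only inside the argument of $f^{(l)}$, and the sole bookkeeping change from the proof of Lemma \ref{hardyint} is the addition of one more countable index $l$ to the intersection. The genuine work toward Theorem \ref{hardydermain} will instead sit in the analogues of Lemmas \ref{hardyopen} and \ref{hardydense} for $U(j,s,l,r)$, where one must control point evaluations and simultaneous approximations of $f^{(l)}$ in the $H^p$ norm rather than those of $f$ itself.
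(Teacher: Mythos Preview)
Your proposal is correct and takes essentially the same approach as the paper, which simply states that the proof is similar to that of Lemma~\ref{hardyint} and omits it. Your sketch is in fact more detailed than what the paper provides, and your observation that the extra index $l$ is carried along without affecting the Mergelyan step or the triangle-inequality argument is exactly the point.
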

	The proof is similar to the proof of Lemma $\ref{hardyint}$ and is omitted.
	\begin{lemma}
		\label{hardyderopen}
		For any $j,l\in\mathbb{N},s\in\mathbb{N}^*$ and any $r\in[0,1)$, the set $U(j,s,l,r)$ is open in $H^p(\mathbb{D})$.
	\end{lemma}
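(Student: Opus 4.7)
The plan is to mirror the proof of Lemma \ref{hardyopen}, with the only new ingredient being a uniform pointwise bound on the $l$-th derivative of $H^p$-functions over a compact subset of $\mathbb{D}$. Fix $f\in U(j,s,l,r)$ and, as before, set
\[
K_{r,s}=\set*{r(\zeta-z)+z:\zeta\in K,\ z\in\overline{D\pare*{0,\tfrac{s}{s+1}}}},
\]
which is a compact subset of $\mathbb{D}$. The quantity
$\sup_{\zeta\in K}\sup_{z\in\overline{D(0,s/(s+1))}}\abs*{f^{(l)}\pare*{r(\zeta-z)+z}-P_j(\zeta)}$
is strictly less than $1/s$, so it suffices to show that small $H^p$-perturbations of $f$ perturb $f^{(l)}$ only slightly on $K_{r,s}$.

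The key step is to produce a constant $C_{r,s,p,l}>0$ such that $\abs{\phi^{(l)}(w)}\leq C_{r,s,p,l}\norm*{\phi}_{H^p}$ for every $\phi\in H^p(\mathbb{D})$ and every $w\in K_{r,s}$. Since $K_{r,s}\subseteq\mathbb{D}$ is compact, there exists $\delta>0$ such that $\overline{D(w,\delta)}\subseteq\mathbb{D}$ for all $w\in K_{r,s}$; then the standard pointwise bound $\abs{\phi(z)}\leq C(p,|z|)\norm{\phi}_{H^p}$ for functions in $H^p(\mathbb{D})$ (cited from \cite{Duren1}, p.~36, as in Lemma \ref{hardyopen}) combined with Cauchy's integral formula
\[
\phi^{(l)}(w)=\frac{l!}{2\pi i}\int_{\abs{\xi-w}=\delta}\frac{\phi(\xi)}{(\xi-w)^{l+1}}\,d\xi
\]
yields the required estimate, uniformly in $w\in K_{r,s}$.

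Setting
\[
\varepsilon:=\frac{\tfrac{1}{s}-\sup_{\zeta\in K}\sup_{z\in\overline{D(0,s/(s+1))}}\abs*{f^{(l)}\pare*{r(\zeta-z)+z}-P_j(\zeta)}}{2C_{r,s,p,l}}>0,
\]
any $g\in H^p(\mathbb{D})$ with $\norm{f-g}_{H^p}<\varepsilon$ satisfies $\abs{g^{(l)}(w)-f^{(l)}(w)}\leq C_{r,s,p,l}\norm{f-g}_{H^p}<\tfrac{1}{2}(\tfrac{1}{s}-\cdots)$ uniformly on $K_{r,s}$, and the triangle inequality applied as in Lemma \ref{hardyopen} delivers $g\in U(j,s,l,r)$.

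The only point requiring care beyond Lemma \ref{hardyopen} is the derivation of the constant $C_{r,s,p,l}$; this is the step I would expect to flag, but it follows routinely from the known point-evaluation bound in $H^p(\mathbb{D})$ together with Cauchy's formula on small disks contained in $\mathbb{D}$. All remaining steps are verbatim analogues of Lemma \ref{hardyopen}.
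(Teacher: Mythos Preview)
Your proof is correct and follows essentially the same approach as the paper's own proof: both obtain the derivative bound $|\phi^{(l)}(w)|\leq C_{r,s,l,p}\norm{\phi}_{H^p}$ on the compact set $K_{r,s}$ by combining the pointwise estimate from \cite{Duren1}, p.~36, with the Cauchy estimates, and then conclude via the same choice of $\varepsilon$ and the triangle inequality exactly as in Lemma~\ref{hardyopen}.
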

	\begin{proof}
		Let $f\in U(j,s,l,r)$. Let $K_{r,s}$ be as in the proof of \ref{hardyopen}.
		Clearly, the set $K_{r,s}$ is a compact subset of $\mathbb{D}$, so, by the Cauchy estimates and the Lemma on p.36 of \cite{Duren1}, we can find $C_{r,s,l,p}>0$ such that $|\phi^{(l)}(z)|\leq C_{r,s,l,p}\norm*{\phi}_{H^p}$ for every $\phi\in H^p(\mathbb{D})$ and $z\in K_{r,s}$. We set $$\varepsilon:=\frac{\frac{1}{s}-\sup_{\zeta\in K}\sup_{z\in \overline{D\pare*{0,\frac{s}{s+1}}}}\abs*{f^{(l)}\pare*{r\pare*{\zeta-z}+z}-P_j(\zeta)}}{2C_{r,s,l,p}}>0.$$ It suffices to show that any function $g\in H^p(\mathbb{D})$ with $\norm*{f-g}_{H^p}<\varepsilon$ belongs to $U(j,s,l,r)$. Indeed, we get that
		\begin{align*}
			\begin{autobreak}
				\MoveEqLeft[0]
				\sup_{\zeta\in K}\sup_{z\in \overline{D\pare*{0,\frac{s}{s+1}}}}\abs*{g^{(l)}\pare*{r\pare*{\zeta-z}+z}-P_j(\zeta)}
				=\sup_{\zeta\in K}\sup_{z\in \overline{D\pare*{0,\frac{s}{s+1}}}}\abs*{g^{(l)}\pare*{r\pare*{\zeta-z}+z}-f^{(l)}\pare*{r\pare*{\zeta-z}+z}+f^{(l)}\pare*{r\pare*{\zeta-z}+z}-P_j(\zeta)}
				\leq\sup_{w\in K_{r,s}}\abs{g^{(l)}(w)-f^{(l)}(w)}
				+\sup_{\zeta\in K}\sup_{z\in \overline{D\pare*{0,\frac{s}{s+1}}}}\abs*{f^{(l)}\pare*{r\pare*{\zeta-z}+z}-P_j(\zeta)}<\frac{1}{s}
			\end{autobreak}
		\end{align*}
		by the definition of $\varepsilon$ and the proof is completed.
	\end{proof}
	We will need the following simultaneous approximation Lemma.
	\begin{lemma}
		\label{hardydermerg}
		Let $p$ and $K$ be as above and $l\in\mathbb{N}$, then, for any continuous function $\phi$ on $K$ and any $\varepsilon>0$, there exists a polynomial $P$ such that
		\begin{equation*}
			\norm*{P}_{H^p}<\varepsilon\text{\ \ \ and \ \ }\sup_{\zeta\in K}\abs*{P^{(l)}\pare*{\zeta}-\phi(\zeta)}<\varepsilon.
		\end{equation*}
	\end{lemma}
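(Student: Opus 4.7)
The plan is to reduce the case of arbitrary $l$ to the case $l=0$, which is exactly Lemma \ref{hardymerg} applied with $g=0$. The only tool needed for the reduction is that the integration operator $T f(z) := \int_0^z f(w)\,dw$ is a contraction on $H^p(\mathbb{D})$, so $l$-fold antidifferentiation of a polynomial costs nothing in the $H^p$-norm while shifting the $l$-th derivative back to the original polynomial.

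Concretely, first apply Lemma \ref{hardymerg} with $g = 0$, the given continuous $\phi$ on $K$, and the tolerance $\varepsilon$, producing a polynomial $R$ with
\begin{equation*}
\norm*{R}_{H^p} < \varepsilon \quad \text{and} \quad \sup_{\zeta \in K} \abs*{R(\zeta) - \phi(\zeta)} < \varepsilon.
\end{equation*}
Then set $P := T^l R$. Since $R$ is a polynomial, so is $P$ (of degree $\deg R + l$, vanishing to order $l$ at $0$), and $P^{(l)} = R$ identically, so the approximation bound $\sup_{\zeta \in K} \abs*{P^{(l)}(\zeta) - \phi(\zeta)} = \sup_K \abs*{R - \phi} < \varepsilon$ is immediate.

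The only technical step, which I expect to be the main (but mild) obstacle, is verifying $\norm*{T f}_{H^p} \leq \norm*{f}_{H^p}$ for every $f \in H^p(\mathbb{D})$. Parametrizing along the radius gives $T f(re^{i\theta}) = e^{i\theta} \int_0^r f(te^{i\theta})\,dt$, and Jensen's inequality applied to $x \mapsto x^p$ together with the uniform measure on $[0,r]$ yields
\begin{equation*}
\abs*{T f(re^{i\theta})}^p \leq r^{p-1} \int_0^r \abs*{f(te^{i\theta})}^p\, dt.
\end{equation*}
Integrating in $\theta$, swapping the order of integration via Fubini, and using the definition of the $H^p$-norm gives
\begin{equation*}
\frac{1}{2\pi}\int_0^{2\pi} \abs*{T f(re^{i\theta})}^p\, d\theta \leq r^{p-1} \int_0^r \norm*{f}_{H^p}^p\, dt = r^p \norm*{f}_{H^p}^p \leq \norm*{f}_{H^p}^p,
\end{equation*}
and taking the supremum in $r$ proves the contraction. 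Iterating it $l$ times yields $\norm*{P}_{H^p} = \norm*{T^l R}_{H^p} \leq \norm*{R}_{H^p} < \varepsilon$, completing the construction. No further approximation theorem is needed; the scheme simply leverages Lemma \ref{hardymerg} together with the fact that antidifferentiation does not increase the $H^p$-norm.
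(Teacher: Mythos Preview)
Your proof is correct and follows the same overall scheme as the paper: apply Lemma~\ref{hardymerg} with $g=0$ to obtain a polynomial approximating $\phi$ on $K$ with small $H^p$-norm, then take its $l$-fold antiderivative vanishing to order $l$ at the origin. The only difference lies in how the $H^p$-norm of the antiderivative is controlled. The paper invokes an inequality of Pritsker (Theorem~3 in \cite{Pritsker1}) to obtain $\norm*{P}_{H^p}\leq\pi^l\norm*{P^{(l)}}_{H^p}$, which forces the initial application of Lemma~\ref{hardymerg} with tolerance $\varepsilon/\pi^l$. Your Jensen/Fubini argument shows directly that the Volterra operator $Tf(z)=\int_0^z f(w)\,dw$ is a contraction on $H^p(\mathbb{D})$, giving the sharper bound $\norm*{T^l R}_{H^p}\leq\norm*{R}_{H^p}$ with no extraneous constant. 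This makes your route slightly more elementary and entirely self-contained, at the cost of writing out the short contraction estimate; the paper's version is terser but relies on an external reference.
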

	\begin{proof}
		By \ref{hardymerg}, there exists a polynomial $\tilde{P}$ such that
		\begin{equation*}
			\norm{\tilde{P}}_{H^p}<\frac{\varepsilon}{\pi^l}\text{\ \ \ and \ \ }\sup_{\zeta\in K}\abs*{\tilde{P}\pare*{\zeta}-\phi(\zeta)}<\varepsilon.
		\end{equation*}
		Let $P$ be the unique polynomial such that $P^{(l)}=\tilde{P}$ and $P$ has a zero of order at least $l$ at 0. By applying Theorem 3 of \cite{Pritsker1} $l$ times, we get that $\norm*{P}_{H^p}\leq \pi^l\norm*{P^{(l)}}_{H^p}=\pi^l\norm{\tilde{P}}_{H^p}<\varepsilon$ and the proof is completed.
	\end{proof}
	\begin{lemma}
		\label{hardyderdense1}
		For any $j,l\in\mathbb{N}$ and $s\in\mathbb{N}^*$, the set $\bigcup_{r\in\rho}U(j,s,l,r)$ intersects any neighbourhood of 0 in $H^p(\mathbb{D})$.
	\end{lemma}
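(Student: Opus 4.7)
The plan is to imitate the proof of Lemma \ref{hardydense}, but use the derivative-aware simultaneous approximation provided by Lemma \ref{hardydermerg} in place of Lemma \ref{hardymerg}. Since Lemma \ref{hardydermerg} delivers a polynomial whose own $H^p$ norm is arbitrarily small, the approximant will automatically lie in any prescribed $H^p$ neighbourhood of $0$, rather than around a given $g$; this is precisely the point at which the derivative version of the density lemma is tailored to $0$ instead of to an arbitrary element of $H^p(\mathbb{D})$.

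Concretely, I would fix $\varepsilon>0$ and set $\varepsilon':=\min\{\varepsilon,1/(2s)\}$. Applying Lemma \ref{hardydermerg} with $\phi:=P_j|_K$ (which is continuous on $K$) and error $\varepsilon'$ yields a polynomial $P$ with $\norm*{P}_{H^p}<\varepsilon$ and $\sup_{\zeta\in K}\abs*{P^{(l)}(\zeta)-P_j(\zeta)}<1/(2s)$. Next, since $P^{(l)}$ is a polynomial it is uniformly continuous on $\overline{\mathbb{D}}$, so there exists $\delta>0$ with $\abs*{P^{(l)}(w_1)-P^{(l)}(w_2)}<1/(2s)$ whenever $w_1,w_2\in\overline{\mathbb{D}}$ satisfy $\abs*{w_1-w_2}<\delta$.

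For any $\zeta\in K$ and $z\in\overline{D\pare*{0,\frac{s}{s+1}}}$ the convex combination $r(\zeta-z)+z=r\zeta+(1-r)z$ lies in $\overline{\mathbb{D}}$, and $\abs*{r(\zeta-z)+z-\zeta}=(1-r)\abs*{\zeta-z}\leq 2(1-r)$. Using that $1$ is a limit point of $\rho$, I would choose $r\in\rho$ with $2(1-r)<\delta$; the triangle inequality then combines the two bounds to give
$$\sup_{\zeta\in K}\sup_{z\in\overline{D\pare*{0,\frac{s}{s+1}}}}\abs*{P^{(l)}\pare*{r(\zeta-z)+z}-P_j(\zeta)}<\frac{1}{2s}+\frac{1}{2s}=\frac{1}{s},$$
so $P\in U(j,s,l,r)$, and $P$ lies in the prescribed neighbourhood of $0$.

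The only substantive ingredient is the existence of a polynomial $P$ with \emph{simultaneously} small $H^p$ norm and small defect $P^{(l)}-P_j$ on $K$; this is precisely what Lemma \ref{hardydermerg} provides, built on top of Lemma \ref{hardymerg} and the Bernstein-type estimate from \cite{Pritsker1}. Once $P$ is in hand, the remainder of the argument is a routine uniform-continuity transfer from $\zeta$ to $r(\zeta-z)+z$, directly parallel to the corresponding step in Lemma \ref{hardydense}.
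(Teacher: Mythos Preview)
Your proof is correct and follows essentially the same approach as the paper: invoke Lemma \ref{hardydermerg} to obtain a polynomial $P$ with small $H^p$ norm and $P^{(l)}$ close to $P_j$ on $K$, then use uniform continuity of $P^{(l)}$ on $\overline{\mathbb{D}}$ together with the estimate $\abs*{r(\zeta-z)+z-\zeta}\leq 2(1-r)$ to pick $r\in\rho$ close enough to $1$. The only cosmetic differences are your explicit introduction of $\varepsilon'=\min\{\varepsilon,1/(2s)\}$ and the choice of $1/(2s)$ rather than $1/(4s)$ in the uniform-continuity step, neither of which affects the argument.
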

	\begin{proof}
		We fix $\varepsilon>0$ and $g\in H^p(\mathbb{D})$. By Lemma \ref{hardydermerg}, there exists a polynomial $P$ such that
		\begin{equation*}
			\norm*{P}_{H^p}<\varepsilon\text{\ \ \ and \ \ }\sup_{\zeta\in K}\abs*{P^{(l)}\pare*{\zeta}-P_j(\zeta)}<\frac{1}{2s}.
		\end{equation*}
		For any $r\in\rho$, let $K_{r,s}$ be as in the proof of \ref{hardyopen}. By uniform continuity of $P^{(l)}$ on compact sets, there exists $\delta>0$ such that $\abs{P^{(l)}(w_1)-P^{(l)}(w_2)}<\frac{1}{4s}$ whenever $|w_1-w_2|<d,z,w\in\overline{\mathbb{D}}$. For any $\zeta\in K$ and $z\in \overline{D\pare*{0,\frac{s}{s+1}}}$ we have that $\abs*{r\pare*{\zeta-z}+z-\zeta}=(1-r)\abs*{\zeta-z}\leq2(1-r)$. Using the fact that 1 is a limit point of $\rho$, there exists $r\in\rho$ such that $2(1-r)<\delta$ and so
		\begin{equation*}
			\sup_{\zeta\in K}\sup_{z\in \overline{D\pare*{0,\frac{s}{s+1}}}}\abs*{P^{(l)}\pare*{r\pare*{\zeta-z}+z}-P^{(l)}(\zeta)}\leq\frac{1}{4s}<\frac{1}{2s}.
		\end{equation*}
		Then, we get that
		\begin{align*}
			\begin{autobreak}
				\MoveEqLeft[0]
				\sup_{\zeta\in K}\sup_{z\in \overline{D\pare*{0,\frac{s}{s+1}}}}\abs*{P^{(l)}\pare*{r\pare*{\zeta-z}+z}-P_j(\zeta)}
				=\sup_{\zeta\in K}\sup_{z\in \overline{D\pare*{0,\frac{s}{s+1}}}}\abs*{P^{(l)}\pare*{r\pare*{\zeta-z}+z}-P^{(l)}\pare*{\zeta}+P^{(l)}\pare*{\zeta}-P_j(\zeta)}
				\leq\sup_{\zeta\in K}\sup_{z\in \overline{D\pare*{0,\frac{s}{s+1}}}}\abs*{P^{(l)}\pare*{r\pare*{\zeta-z}+z}-P^{(l)}(\zeta)}
				+\sup_{\zeta\in K}\abs*{P^{(l)}\pare*{\zeta}-P_j(\zeta)}
				<\frac{1}{s}
			\end{autobreak}
		\end{align*}
		and the proof is completed.
	\end{proof}
	\begin{lemma}
		\label{hardyderdense2}
		For any $j,l\in\mathbb{N}$ and $s\in\mathbb{N}^*$, the set $\bigcup_{r\in\rho}U(j,s,l,r)$ is dense in $H^p(\mathbb{D})$.
	\end{lemma}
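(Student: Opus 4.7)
The plan is to upgrade Lemma \ref{hardyderdense1} from intersecting every neighbourhood of $0$ to intersecting every neighbourhood of an arbitrary $g\in H^p(\mathbb{D})$. The obstruction compared with Lemma \ref{hardydense} is that the simultaneous approximation Lemma \ref{hardymerg} directly produces a polynomial close to $g$ in $H^p$ whose restriction to $K$ is close to $P_j$, whereas here we need the $l$-th derivative on $K$ to be close to $P_j$. The remedy is to split into an $H^p$-approximation of $g$ by an arbitrary polynomial $Q$ and then a correction using Lemma \ref{hardydermerg} applied to the continuous function $P_j-Q^{(l)}|_K$.

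More concretely, I would fix $g\in H^p(\mathbb{D})$ and $\varepsilon>0$ and proceed in three steps:

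\textbf{Step 1 (Bulk approximation).} Since polynomials are dense in $H^p(\mathbb{D})$, pick a polynomial $Q$ with $\norm{g-Q}_{H^p}<\varepsilon/2$.

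\textbf{Step 2 (Derivative correction).} Set $\psi:=P_j-Q^{(l)}|_K$, a continuous function on $K$. Apply Lemma \ref{hardydermerg} to $\psi$ with tolerance $\min(\varepsilon/2,1/(3s))$ to obtain a polynomial $P$ with $\norm{P}_{H^p}<\varepsilon/2$ and $\sup_{\zeta\in K}\abs*{P^{(l)}(\zeta)-(P_j(\zeta)-Q^{(l)}(\zeta))}<1/(3s)$.

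\textbf{Step 3 (Radius selection).} The polynomials $Q^{(l)}$ and $P^{(l)}$ are uniformly continuous on $\overline{\mathbb{D}}$, so choose $\delta>0$ controlling both moduli of continuity by $1/(3s)$. As in the proof of Lemma \ref{hardyderdense1}, pick $r\in\rho$ with $2(1-r)<\delta$, which guarantees $\abs*{r(\zeta-z)+z-\zeta}<\delta$ uniformly for $\zeta\in K$ and $z\in\overline{D(0,s/(s+1))}$.

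Setting $f:=Q+P$, the triangle inequality gives $\norm{f-g}_{H^p}<\varepsilon$. For the pointwise estimate, writing $w=r(\zeta-z)+z$ and inserting $Q^{(l)}(\zeta)+P^{(l)}(\zeta)$, I would bound
\begin{align*}
\abs*{f^{(l)}(w)-P_j(\zeta)}
&\leq \abs*{Q^{(l)}(w)-Q^{(l)}(\zeta)}+\abs*{P^{(l)}(w)-P^{(l)}(\zeta)}\\
&\quad +\abs*{Q^{(l)}(\zeta)+P^{(l)}(\zeta)-P_j(\zeta)},
\end{align*}
where the first two terms are controlled by Step 3 and the last by Step 2, each by $1/(3s)$. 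Taking suprema over $\zeta\in K$ and $z\in\overline{D(0,s/(s+1))}$ yields $f\in U(j,s,l,r)\subseteq\bigcup_{r\in\rho}U(j,s,l,r)$, completing the proof.

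The one delicate point is ensuring the $H^p$-norm bound on the corrector $P$ is simultaneously compatible with the approximation on $K$; this is exactly what Lemma \ref{hardydermerg} (which was tailored using the Pritsker-type estimate $\norm{P}_{H^p}\leq\pi^l\norm{P^{(l)}}_{H^p}$) delivers, so the argument should be a direct assembly of the tools already built above.
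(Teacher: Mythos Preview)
Your proof is correct and follows essentially the same strategy as the paper's: approximate $g$ by a polynomial, add a small-$H^p$ corrector whose $l$-th derivative handles the defect $P_j-Q^{(l)}$ on $K$, and then select $r\in\rho$ via uniform continuity. The only cosmetic difference is that the paper routes the correction through Lemma~\ref{hardyderdense1} (choosing $Q=P_{j_1}$ from the rational enumeration so that $P_j-P_{j_1}^{(l)}=P_{j_2}$ and then taking $f_0\in U(j_2,2s,l,r)$), whereas you apply Lemma~\ref{hardydermerg} directly to the continuous function $P_j-Q^{(l)}|_K$; your packaging is slightly cleaner since it avoids the rational-coefficient detour.
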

	\begin{proof}
		We fix $\varepsilon>0$ and $g\in H^p(\mathbb{D})$. Since the polynomials are dense in $H^p(\mathbb{D})$, there exists $j_1\in\mathbb{N}$ such that $\norm*{P_{j_1}-g}_{H^p}<\frac{\varepsilon}{2}$.
		Then, there exists $r_0$ such that
		\begin{equation*}
			\sup_{\zeta\in K}\sup_{z\in \overline{D\pare*{0,\frac{s}{s+1}}}}\abs*{P_{j_1}^{(l)}\pare*{r\pare*{\zeta-z}+z}-P_{j_1}^{(l)}(\zeta)}<\frac{1}{2s}.
		\end{equation*}
		for any $r\geq r_0$. Then, $P_j-P_{j_1}^{(l)}$ is a polynomial with coefficients with rational coordinates, so it is equal to $P_{j_2}$ for some $j_2\in\mathbb{N}$. By \ref{hardyderdense1}, there exists a $r\in\rho$ such that $r\geq r_0$ and $f_0\in U(j_2,2s,l,r)$ such that $\norm*{f_0}_{H^p}<\frac{\varepsilon}{2}$. So, we have that
		$$\sup_{\zeta\in K}\sup_{z\in \overline{D\pare*{0,\frac{s}{s+1}}}}\abs*{f_0^{(l)}\pare*{r\pare*{\zeta-z}+z}-P_{j_2}(\zeta)}<\frac{1}{2s}.$$
		By setting $P:=f_0+P_{j_1}$, we get that $\norm*{P-g}_{H^p}<\varepsilon$, and also
		\begin{align*}
			\begin{autobreak}
				\MoveEqLeft[0]
				\sup_{\zeta\in K}\sup_{z\in \overline{D\pare*{0,\frac{s}{s+1}}}}\abs*{P^{(l)}\pare*{r\pare*{\zeta-z}+z}-P_j(\zeta)}
				=\sup_{\zeta\in K}\sup_{z\in \overline{D\pare*{0,\frac{s}{s+1}}}}\abs*{f_0^{(l)}\pare*{r\pare*{\zeta-z}+z}+P_{j_1}^{(l)}\pare*{r\pare*{\zeta-z}+z}-P_{j_1}^{(l)}(\zeta)-P_{j_2}(\zeta)}
				\leq\sup_{\zeta\in K}\sup_{z\in \overline{D\pare*{0,\frac{s}{s+1}}}}\abs*{f_0^{(l)}\pare*{r\pare*{\zeta-z}+z}-P_{j_2}(\zeta)}
				+\sup_{\zeta\in K}\sup_{z\in \overline{D\pare*{0,\frac{s}{s+1}}}}\abs*{P_{j_1}^{(l)}\pare*{r\pare*{\zeta-z}+z}-P_{j_1}^{(l)}(\zeta)}
				\leq\frac{1}{2s}+\frac{1}{2s}=\frac{1}{s}
			\end{autobreak}
		\end{align*}
		and so $P\in U(j,s,l,r)$.
	\end{proof}
	The proof of Theorem \ref{hardydermain} follows trivially by combining Lemmas \ref{hardyderint},\ref{hardyderopen} and \ref{hardyderdense2} with Baire's Theorem.
	
	We now focus on the Bergman Spaces. Let $2<p<\infty$. Let also $\rho$ be a subset of $[0,1)$ with 1 as a limit point and $K$ be a compact subset of $\mathbb{T}$ such that either $m(K)>0$ and $K$ does not contain a compact subset of positive measure satisfying Carleson's condition or $m(K)=0$.
	\begin{definition}
		We define $\mathcal{DV}_{A^p}^K(\mathbb{D},\rho)$ to be the set of $f\in A^p(\mathbb{D})$ that satisfy the following property: Given any continuous function $\phi$ on $K$, any compact subset $L$ of $\mathbb{D}$ and any $l\in\mathbb{N}$, there exists an increasing sequence $(r_n)_{n\in\mathbb{N}}\subseteq\rho$ converging to 1 such that
		$$\sup_{\zeta\in K}\sup_{z\in L}\abs*{f^{(l)}\pare*{r_n\pare*{\zeta-z}+z}-\phi(\zeta)}\to0\text{ as }n\to\infty.$$
	\end{definition}
	\begin{theorem}
		\label{bergdermain}
		The set $\mathcal{DV}_{A^p}^K(\mathbb{D},\rho)$ is a dense $G_\delta$ subset of $A^p(\mathbb{D})$.
	\end{theorem}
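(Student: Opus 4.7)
My plan is to prove Theorem \ref{bergdermain} by a mechanical adaptation of the proof of Theorem \ref{hardydermain}, using the Bergman simultaneous approximation from Lemma \ref{bergmerg} in place of Lemma \ref{hardymerg}. First, I introduce, for $j,l\in\mathbb{N}$, $s\in\mathbb{N}^*$ and $r\in[0,1)$, the sets
\[
V(j,s,l,r):=\set*{f\in A^p(\mathbb{D}):\sup_{\zeta\in K}\sup_{z\in \overline{D\pare*{0,\frac{s}{s+1}}}}\abs*{f^{(l)}\pare*{r\pare*{\zeta-z}+z}-P_j(\zeta)}<\tfrac{1}{s}},
\]
and establish the identity $\mathcal{DV}_{A^p}^K(\mathbb{D},\rho)=\bigcap_{j,s,l} \bigcup_{r\in\rho}V(j,s,l,r)$ in the same way as in Lemma \ref{hardyderint}. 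By Baire's Theorem it then suffices to verify that each $V(j,s,l,r)$ is open in $A^p(\mathbb{D})$ and that each union $\bigcup_{r\in\rho}V(j,s,l,r)$ is dense in $A^p(\mathbb{D})$.

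The openness of $V(j,s,l,r)$ will follow from an adaptation of Lemma \ref{hardyderopen}: using the Bergman pointwise bound $|\phi(z)|\leq C_{L,p}\norm{\phi}_{A^p}$ on compact subsets $L\subseteq\mathbb{D}$ (see \cite{Duren2} p.7, Theorem 1), combined with Cauchy's estimates on a slightly larger compact neighbourhood of $K_{r,s}$, one produces a constant $C_{r,s,l,p}>0$ with $|\phi^{(l)}(z)|\leq C_{r,s,l,p}\norm{\phi}_{A^p}$ for every $\phi\in A^p(\mathbb{D})$ and $z\in K_{r,s}$. From this point the triangle-inequality argument of Lemma \ref{hardyderopen} transfers verbatim.

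For density I will imitate the two-step strategy of Lemmas \ref{hardyderdense1} and \ref{hardyderdense2}. The only new ingredient to supply is a Bergman analogue of Lemma \ref{hardydermerg}: for any continuous $\phi$ on $K$ and any $\varepsilon>0$, I need a polynomial $P$ with $\norm{P}_{A^p}<\varepsilon$ and $\sup_{\zeta\in K}\abs{P^{(l)}(\zeta)-\phi(\zeta)}<\varepsilon$. As in the Hardy setting, I intend to apply Lemma \ref{bergmerg} to obtain a polynomial $\tilde P$ with $\sup_K|\tilde P-\phi|$ small and $\norm{\tilde P}_{A^p}$ small, and then take $P$ to be the unique polynomial with $P^{(l)}=\tilde P$ that vanishes to order at least $l$ at $0$.

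The main obstacle, and the precise reason for the hypothesis $2<p<\infty$, is an inequality of the form $\norm{P}_{A^p}\leq C(l,p)\norm{P^{(l)}}_{A^p}$ for polynomials $P$ with a zero of order $l$ at $0$. In the Hardy case this was supplied by Pritsker's Theorem 3 from \cite{Pritsker1}; in the Bergman setting one needs the boundedness on $A^p$ of the order-$l$ antiderivative operator sending $P^{(l)}$ to $P$, which is available in the range $p>2$. Once this estimate is established, the uniform-continuity argument of Lemma \ref{hardyderdense1} goes through with only the obvious replacements, the decomposition $P=f_0+P_{j_1}$ in Lemma \ref{hardyderdense2} is carried out identically, and Baire's Theorem delivers the conclusion.
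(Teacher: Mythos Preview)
Your proposal is correct and follows essentially the same route as the paper: the paper likewise omits the proof as being similar to those of Theorems \ref{bergmain} and \ref{hardydermain}, with the key new ingredient being Lemma \ref{bergdermerg}, the Bergman analogue of Lemma \ref{hardydermerg}. The only detail you leave slightly vague is the source of the antiderivative inequality $\norm{P}_{A^p}\leq C(l,p)\norm{P^{(l)}}_{A^p}$ for $p>2$; the paper supplies this via Theorem 9 (13) of \cite{Pritsker1}, and explicitly notes that this inequality fails for $1\leq p\leq 2$, which is why the method does not extend to that range.
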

	The proof of Theorem \ref{bergdermain} is similar to the proofs of \ref{bergmain} and \ref{hardydermain} and is omitted. The main tool is the following Lemma:
	\begin{lemma}
		\label{bergdermerg}
		Let $p$ and $K$ be as above and $l\in\mathbb{N}$, then, for any continuous function $\phi$ on $K$ and any $\varepsilon>0$, there exists a polynomial $P$ such that
		\begin{equation*}
			\norm*{P}_{A^p}<\varepsilon\text{\ \ \ and \ \ }\sup_{\zeta\in K}\abs*{P^{(l)}\pare*{\zeta}-\phi(\zeta)}<\varepsilon.
		\end{equation*}
	\end{lemma}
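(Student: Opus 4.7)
The plan is to adapt the proof of Lemma \ref{hardydermerg} to the Bergman setting. First I would invoke Lemma \ref{bergmerg} (applied with $g\equiv 0$) to obtain a polynomial $\tilde{P}$ satisfying
$$\norm*{\tilde P}_{A^p} < \delta \quad \text{and} \quad \sup_{\zeta\in K}\abs*{\tilde P(\zeta)-\phi(\zeta)} < \varepsilon,$$
for a small parameter $\delta>0$ to be fixed at the end. I would then let $P$ be the unique polynomial satisfying $P^{(l)}=\tilde P$ together with $P(0)=P'(0)=\cdots=P^{(l-1)}(0)=0$, that is, the $l$-fold antiderivative of $\tilde P$ normalized to have a zero of order at least $l$ at the origin. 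With this choice the second estimate above is exactly the required control $\sup_{\zeta\in K}\abs*{P^{(l)}(\zeta)-\phi(\zeta)}<\varepsilon$.

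What remains is to establish a Bergman analogue of the Pritsker estimate used in \ref{hardydermerg}: a constant $C_{p,l}$, depending only on $p$ and $l$, for which
$$\norm*{P}_{A^p} \leq C_{p,l}\norm*{P^{(l)}}_{A^p}$$
whenever $P$ is a polynomial vanishing to order at least $l$ at the origin. Once this is available, the choice $\delta:=\varepsilon/C_{p,l}$ yields $\norm*{P}_{A^p}<\varepsilon$ and the lemma is proved. A natural route to such an inequality is the Littlewood--Paley type characterization
$$\norm*{f}_{A^p}^p \asymp \sum_{k=0}^{l-1}\abs*{f^{(k)}(0)}^p + \int_{\mathbb{D}}\abs*{f^{(l)}(z)}^p(1-|z|^2)^{pl}\,dm_2(z),$$
valid on $A^p(\mathbb{D})$; since $P^{(k)}(0)=0$ for $k=0,\dots,l-1$ and $(1-|z|^2)^{pl}\leq 1$, the right-hand side is bounded by $C\int_{\mathbb{D}}\abs*{P^{(l)}(z)}^p\,dm_2(z)=C\norm*{\tilde P}_{A^p}^p$. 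Equivalently, one could iterate $l$ times a first-order bound $\norm*{Q}_{A^p}\leq C_p\norm*{Q'}_{A^p}$ valid for polynomials $Q$ with $Q(0)=0$.

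The main obstacle is precisely this norm inequality. Unlike the Hardy norm, the Bergman norm does not reduce to a boundary integral, so the Pritsker-type argument cannot be ported directly; instead one must rely either on a Littlewood--Paley identity or on the boundedness of the integration operator $f\mapsto\int_0^z f(w)\,dw$ on $A^p(\mathbb{D})$, with constants that are independent of the degree of the polynomial. The restriction $2<p<\infty$ in the hypotheses of Theorem \ref{bergdermain} appears to be exactly what makes such uniform-in-degree estimates available in their cleanest form.
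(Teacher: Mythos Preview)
Your overall architecture is exactly the paper's: apply Lemma \ref{bergmerg} with $g=0$ to get $\tilde P$, take the $l$-fold antiderivative $P$ vanishing to order $l$ at $0$, and then control $\norm{P}_{A^p}$ by $\norm{\tilde P}_{A^p}$. The only divergence is in the tool for that last estimate. The paper simply cites Theorem 9 (13) of \cite{Pritsker1}, a Pritsker-type inequality $\norm{P}_{A^p}\le C\norm{P'}_{A^p}$ for polynomials with $P(0)=0$, which is stated there only for $p>2$; this is the sole reason for the restriction $2<p<\infty$ in Theorem \ref{bergdermain}. Your route via the Littlewood--Paley description $\norm{f}_{A^p}^p\asymp |f(0)|^p+\int_{\mathbb{D}}|f'(z)|^p(1-|z|^2)^p\,dm_2(z)$, followed by the trivial bound $(1-|z|^2)^p\le 1$, is a legitimate alternative and in fact holds for every $0<p<\infty$. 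So your closing remark has it backwards: the restriction $p>2$ is not what makes your estimate work --- it is an artefact of the paper's particular reference, and your argument actually removes it.
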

	The proof is similar to the proof of \ref{hardydermerg}. The main tool of the proof is Theorem 9 (13) of \cite{Pritsker1}. We notice that the inequality in \cite{Pritsker1} only holds for $p>2$ and so this method can not be applied to prove a similar result for $1\leq p\leq2$.
	\vspace{10mm}
	\par
	\textit{Acknowledgement}. I would like to thank Dr Myrto Manolaki for her guidance throughout the creation of the paper and Professors Vassili Nestoridis and Paul Gauthier for their helpful suggestions. I would also like to acknowledge financial support from the Irish Research Council.
	\bibliographystyle{amsplain}
	\bibliography{refs}
	\itshape
	School of Mathematics and Statistics\\
	University College Dublin\\
	Belfield, Dublin 4, Ireland\\[\baselineskip]
	E-mail:\\
	conmaron@gmail.com
\end{document}